\title{The search for non-symmetric ribbon knots}
\author{Christoph Lamm}
\theoremstyle{plain}
  \newtheorem{theorem}{Theorem}[section]
  \newtheorem{proposition}[theorem]{Proposition}
\theoremstyle{definition}
  \newtheorem{definition}[theorem]{Definition}
  \newtheorem{question}[theorem]{Question}
  \newtheorem{example}[theorem]{Example}
	\newtheorem{conjecture}[theorem]{Conjecture}
\newcommand{\R}{\mathbb{R}}
\begin{document} %%%%%%%%%%%%%%%%%%%%%%%%%%%%%%%%%%%%%%%%%%%%%%%%%%%%%%%%%%%%
%%%%%%%%%%%%%%%%%%%%%%%%%%%%%%%%%%%%%%%%%%%%%%%%%%%%%%%%%%%%%%%%%%%%%%%%%%%%%

\begin{abstract}
We present the results of Axel Seeliger's tabulation of symmetric union presentations 
for ribbon knots with crossing numbers 11 and 12 and exhibit possible examples for ribbon 
knots which are not representable as symmetric unions. In addition, we give a complete 
atlas of band diagrams for prime ribbon knots with 11 and 12 crossings.
\end{abstract}

\keywords{symmetric union presentations of ribbon knots, atlas of ribbon knots}
\subjclass[2000]{57M25, 57M27}

%%%%%%%%%%%%%%%%%%%%%%%%%%%%%%%%%%%%%%%%%%%%%%%%%%%%%%%

% captions seem to be too near to the tables
\captionsetup{belowskip=10pt,aboveskip=10pt}

% scaling of diagrams in the main overview (4 pages)
\def\scaling{0.8}

% box colour for determinants
\definecolor{myboxcolour}{gray}{0.8}

% use this if 'oneside' is active
\reversemarginpar

%%%%%%%%%%%%%%%%%%%%%%%%%%%%%%%%%%%%%%%%%%%%%%%%%%%%%%%

\maketitle

\section{Introduction and outline of results} \label{sec:Introduction}

\subsection{The setting}
We recall that a knot $K \subset \R^3$ is a \emph{ribbon knot} if it bounds a
smoothly immersed disk $\mathbb{D}^2 \looparrowright \R^3$ whose only
singularities are ribbon singularities, two sheets intersecting in an arc whose
preimage consists of a properly embedded arc in $\mathbb{D}^2$ and an
embedded arc interior to $\mathbb{D}^2$.
Ribbon knots are frequently illustrated using the connected sum of a knot $K$ and
its mirror image with reversed orientation, $-K$. The notion of symmetric union, introduced
in 1957 by Shin'ichi Kinoshita and Hidetaka Terasaka \cite{KinoshitaTerasaka}, generalises this by
inserting crossings on the symmetry axis. The construction of a band with ribbon
singularities remains the same as for $K\#-K$, with additional half-twists on the axis
for each inserted crossing, see Figure \ref{fig:achter_3D}.
This article analyses the question which ribbon knots can be represented by symmetric
unions -- we call them symmetric ribbon knots -- and which are non-symmetric.
Currently the question whether non-symmetric ribbon knots exist is open.

\begin{figure}[hbtp]
  \centering
  \includegraphics[scale=0.62]{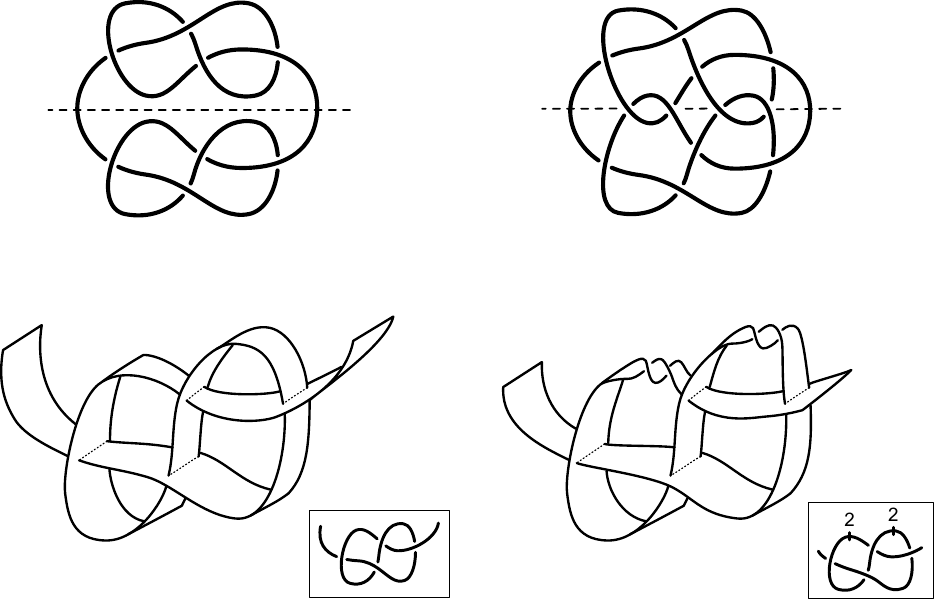}
  \caption{Symmetric unions and ribbons for the Kanenobu family of knots \cite{Kanenobu}, based on
	a diagram of $4_1\#-4_1$. On the right, the knot 12n838 is obtained by inserting crossings on the
	axis, in this case resulting in two full twists of the band. The insets show an abbreviated notation.}
  \label{fig:achter_3D}
\end{figure} 

\subsection{Knot tabulation and unknown values in the table of knots}
The tabulation of prime knots to 16 crossings was completed 20 years ago \cite{HosteThistlethwaiteWeeks}.
A systematic list of unknown values for knot invariants was initiated in 2005 by Jae Choon Cha and Charles Livingston 
\cite{ChaLivingstonUnknown}. Input for ribbon knots came in 2006 from Jenelle McAtee, Alexander Stoimenov 
and the author and for prime knots with 11 and 12 crossings 19 cases remained open. In 2008 Chris Herald, 
Paul Kirk and Charles Livingston ruled out 16 of these and showed that 12a990 is smoothly slice 
\cite{HeraldKirkLivingston}; the two cases not yet settled were 11n34 and 12a631.

\subsection{The search for symmetric unions}
In 2006 symmetric union presentations were known for all 21 prime ribbon knots with 10 or fewer crossings and for all 
2-bridge ribbon knots \cite{Lamm}, \cite{Lamm_talk}. In the article \cite{EisermannLamm2007} we proposed a search project
in order to assess how many ribbon knots with crossing number $\ge 11$ are symmetric unions.
This search was performed in 2012 by Axel Seeliger \cite{Seeliger} for the range of crossing numbers 11 
through 14 and unexpectedly produced a symmetric ribbon for 12a631. As a result, with the possible exception 
of 11n34, the list of prime ribbon (and smoothly slice) knots up to 12 crossings was complete.
Recently, in 2018, the knot 11n34 was shown not to be smoothly slice \cite{Piccirillo}.
Seeliger's search showed that out of 137 prime ribbon knots with 11 and 12 crossings the surprisingly 
high number of 122 knots have symmetric union presentations. In this article the diagrams of all
of these knots are shown in an efficient way so that only 28 diagram templates are necessary.

\subsection{Non-symmetric ribbons}
The usefulness of this list of diagrams is further enhanced by four non-symmetric ribbons 
which generate all prime ribbon knots for which a symmetric diagram was not found.
These four non-symmetric ribbons are shown in Figures \ref{fig:ribbonTransformation}, 
\ref{fig:ribbon11a103} and \ref{fig:ribbons_C_D}.
They generate knots with determinants 9, 81 and 225 and we discuss their properties with the goal to find 
candidates for prime or composite ribbon knots which do not allow a symmetric union presentation. 
The most promising candidates seem to be the composite ribbon knots $3_1\#8_{10}$ and $3_1\#8_{11}$.

The 28 diagram templates and 4 non-symmetric ribbons constitute a complete atlas of knot diagrams
for ribbon knots with 11 and 12 crossings, showing the ribbon property. For many knots it is also
the first time that ribbon diagrams are available at all: for them the information to be ribbon was 
the result of a computer search, entered into the database Knotinfo \cite{ChaLivingstonKnotInfo}
without publishing the diagram information.

\subsection{Characterisation and equivalences of bands}
In the last two sections, we characterise bands constructed from symmetric union diagrams 
and introduce the knotoid notation for symmetric ribbon disks.

We distinguish two equivalence relations for ribbon knots: a knot given by a specific ribbon can be transformed 
to a symmetric union in a way that the ribbon is respected, or to a symmetric union presentation with any ribbon. 
We give examples and conjectures containing both types of equivalences. However, this topic -- requiring the
definition of band moves, similar to Reidemeister moves for knots -- is not pursued in depth.

\section{Background on slice knots and band-sums} \label{sec:Background}
For the purpose of this article we mainly need the following property of symmetric unions:
a knot with a symmetric union diagram obtained from $K\#-K$ is a ribbon knot with determinant 
equal to the square of the determinant of $K$. The ingredient $K$ is called the \textsl{partial knot} 
of the symmetric union diagram. As we usually do not need to distinguish $K$ and $-K$ as partial knots, 
we use the notation $K_{\pm}$. Because of the important property of the determinants, our atlas of 
symmetric ribbon knots is organised by (square) determinants.
We refer to the articles \cite{EisermannLamm2007} and \cite{EisermannLamm2011} for additional topics, for 
instance symmetrical equivalence and the two-variable Jones polynomial defined for symmetric union diagrams.

\subsection{Three equivalent definitions of ribbon knots}
We defined ribbon knots as immersed disks with only ribbon singularities.
Alternatively, ribbon knots may be defined using band-sums, a generalisation of the connected sum of knots,
see \cite{Kanenobu2010} for more details. To motivate the occurence of glueing in bands we need a digression 
on the definition of slice knots in the 4-dimensional setting. 

\begin{figure}[hbtp]
  \centering
  \includegraphics[scale=0.85]{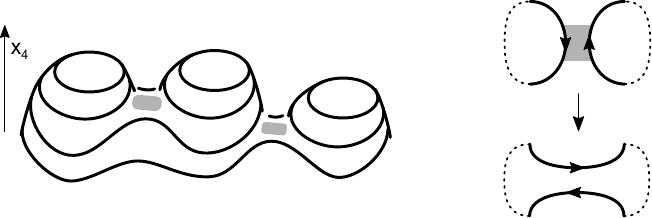}
  \caption{A disk with three local maxima and two saddles and the band-sum definition}
  \label{fig:band-sum}
\end{figure}

Recall that a knot is smoothly slice if it is the boundary of a smooth disk $\mathbb{D}^2$ embedded 
in $\R^4_+ = \{ x \in  \R^4 \mid x_4 \ge 0 \}$. If the disk is given in Morse position, it can be described 
as a movie. Local maxima correspond to new trivial components, saddles to the insertion of bands 
(band-sums) and local minima to the disappearance of trivial components. Ribbon knots are characterised 
by disks without local minima. The slice-ribbon problem asks if every slice knot is a ribbon knot.

A band-sum in which the number of components of a link decreases by one (this 
is also called a `fusion')  is illustrated in the right of Figure \ref{fig:band-sum}. 
\begin{figure}[hbtp]
  \centering
  \includegraphics[scale=0.65]{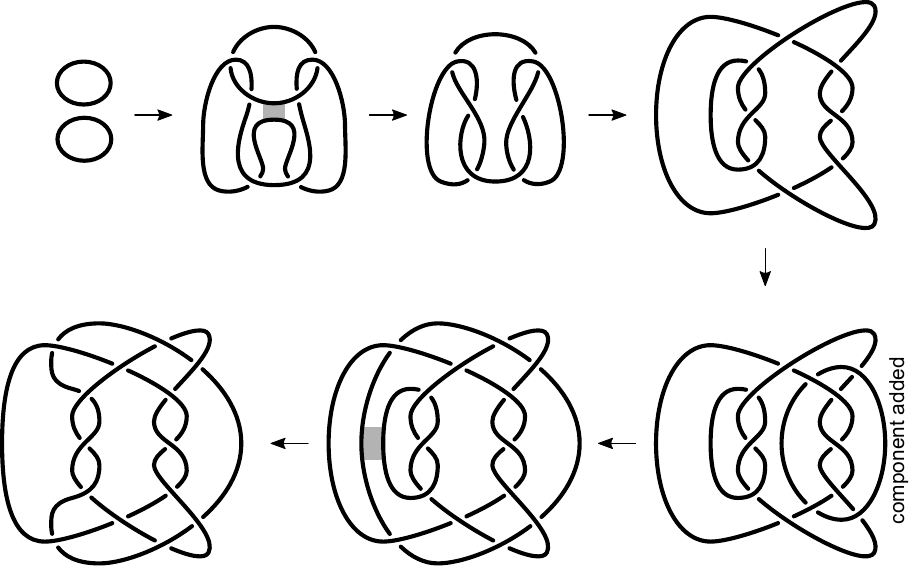}
  \caption{Example of a movie (for the knot 12a1225)}
  \label{fig:saddle_transformation_12a1225}
\end{figure}

The insertion of bands to connect different components of a link occurs therefore in a natural way and 
by the arguments of the previous paragraph ribbon knots can be defined as knots obtained from a trivial 
$m+1$-component link by doing $m$ band-sum operations for some integer $m \ge 1$. In each operation the 
number of components decreases by one so that the result is a knot. As an example for $m=2$ we show a 
movie for the knot 12a1225 in Figure \ref{fig:saddle_transformation_12a1225}. It is based on a diagram in 
\cite{Watson}. The two areas affected by band-sums are marked in grey. The (deformed) second band can 
be clearly seen in the diagram with ribbon singularities in the left of Figure \ref{fig:ribbons_C_D}.

\subsection{Symmetric ribbon knots defined as symmetric band-sums of trivial links}
We would like to apply the definition by band-sums to \textsl{symmetric} ribbon knots: 
in this case the trivial $m+1$-component link and the bands need to be symmetric to a plane;
for the inserted bands we allow twists as in the definition of symmetric union diagrams.
This definition with band-sums of a trivial link is equivalent to the usual definition of symmetric unions because
the symmetric band can be cut open at the singularities and at the twist places, yielding a trivial link, and conversely 
bands can be glued in at the same places to obtain the symmetric band, see Figure \ref{fig:achter_3D_split}. 
This characterisation was also used by Toshifumi Tanaka in \cite{Tanaka2010}.

\begin{figure}[hbtp]
  \centering
  \includegraphics[scale=0.8]{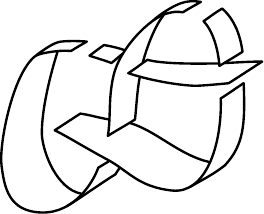}
  \caption{A trivial 6-component link which can be transformed into 12n838 by band-sums (compare with Figure 
	\ref{fig:achter_3D}). Untwisted bands are glued in where singularities have been cut out and twisted bands
	in the twist places.}
  \label{fig:achter_3D_split}
\end{figure}

We now analyse the symmetry in the movie in Figure \ref{fig:saddle_transformation_12a1225}.
The first band-sum is applied to a symmetric trivial 2-component diagram and results in a symmetric union
diagram for $3_1\#-3_1$. The third trivial component is added symmetrically with respect to a new
axis perpendicular to the previous one and the second band-sum is again done symmetrically but 
with respect to the second axis. Of course, the fact that two different symmetry axes are used in
the movie does not prove that the knot 12a1225 is a non-symmetric ribbon knot; it might possess
a symmetric presentation as well.

A symmetry condition for bands with ribbon singularities due to Paolo Aceto is given in Section \ref{sec:banddiagrams}.

\section{Tabulation of symmetric ribbon knots with 11 and 12 crossings} \label{sec:Tabulation}

\subsection{Seeliger's tabulation method}
In a computer search Seeliger used the Dowker-Thistle\-thwaite code to generate partial knots $K$, applying a 
new graph-theoretic method for testing realisability \cite{Seeliger}. He then generated diagrams for $K \# -K$, 
allowing additional variants in cases where the Dowker-Thistlethwaite code is not unique (composite partial knots).
After adding twist tangles on the symmetry axis, he checked the Jones polynomial of the symmetric
union against the list of all Jones polynomials of prime ribbon knot candidates up to 14 crossings (defined as 
knots which satisfy the $f(t)\cdot f(1/t)$ condition for the Alexander polynomial and having zero signature).
Identification was done with Knotscape \cite{Knotscape}. This resulted in a complete search for symmetric 
diagrams up to 16 crossings. Diagrams with more than 16 crossings were also generated, using heuristical strategies 
in order to find additional knots with low crossing number. 
Minimality of the crossing number is therefore assured also for 17 crossings.
Once a knot had been found it was omitted from the search -- we do not know if another
diagram with the same number of crossings but e.g.\,different partial knot would have been found. 
For 11 and 12 crossing knots the result is summarised in the proposition:

\enlargethispage{1cm}

\begin{proposition}[Seeliger]
Out of 137 prime ribbon knots with 11 and 12 crossing 122 knots have symmetric union presentations.
\end{proposition}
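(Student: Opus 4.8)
The plan is to prove the proposition by direct construction: since the claim is only that $122$ of the $137$ knots \emph{admit} symmetric union presentations, it suffices to exhibit one such presentation for each and to verify that it represents the named knot. First I would fix the input list, namely the $137$ prime ribbon knots with $11$ and $12$ crossings, read off from the knot tables \cite{HosteThistlethwaiteWeeks} together with the ribbon data recorded in \cite{ChaLivingstonKnotInfo}. Because a symmetric union built from $K\#-K^*$ has determinant equal to $\det(K)^2$ and signature zero (the background result), the possible partial knots $K$ for a given target are severely constrained: $\det(K)$ must be a square root of the target determinant and $K$ must itself have zero signature. This lets me organise the search by square determinant and enumerate only the finitely many candidate partial knots of small crossing number for each target.

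Next I would run the search in the style of Seeliger: generate the candidate partial knots via their Dowker--Thistlethwaite codes, form the connected sum $K\#-K^*$, and insert twist tangles on the symmetry axis, sweeping over all symmetric unions with a bounded number of axis crossings. For each resulting diagram I would compute the Jones polynomial and match it against the precomputed list of Jones polynomials of all prime ribbon-knot candidates up to $14$ crossings (those satisfying the $f(t)\cdot f(1/t)$ condition for the Alexander polynomial and having zero signature). A match flags a likely identification, which I would then confirm with Knotscape \cite{Knotscape}. Whenever a target from the input list is realised, it is marked as symmetric and removed from the pool; the count of $122$ is then simply the number of targets so marked.

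The main obstacle is reliable \emph{identification} rather than the generation of diagrams. The Jones polynomial is not a complete invariant, so a Jones match only narrows the candidate set; the genuine certification that a constructed diagram equals a specific tabulated knot must come from a normalising routine such as Knotscape, and one must guard against two distinct symmetric unions realising the same target (in which case only one presentation is needed) and against near-collisions of Jones polynomials among the $11$- and $12$-crossing ribbon candidates. A secondary, purely bookkeeping difficulty is crossing-number minimality: to be sure each exhibited diagram certifies the \emph{named} knot and not a higher-crossing relative, I would rely on the fact that the underlying tabulation is complete through $16$ crossings, so that Knotscape's identification is unambiguous in this range. Once every one of the $122$ diagrams is certified, the proposition follows by inspection of the resulting atlas; the remaining $15$ knots are precisely those for which no presentation turned up within the search bounds, and --- crucially --- no claim is made about them here.
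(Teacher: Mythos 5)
Your overall strategy---generate partial knots via Dowker--Thistlethwaite codes, form symmetric unions by inserting twist tangles on the axis, filter by matching the Jones polynomial against the precomputed list of prime ribbon candidates up to 14 crossings (the $f(t)\cdot f(1/t)$ Alexander condition plus zero signature), and certify identifications with Knotscape within the range where the search is complete---is precisely the method the paper attributes to Seeliger, and your cautions about the Jones polynomial not being a complete invariant and about the 16-crossing completeness bound match the paper's own discussion.

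There is, however, one concrete error in your search design that would make the constructive proof fail: you require the partial knot $K$ itself to have zero signature. The zero-signature and $f(t)\cdot f(1/t)$ conditions constrain the \emph{targets} of the search, i.e.\ the ribbon-knot candidates which the symmetric unions are matched against (a ribbon knot automatically has vanishing signature); they impose no constraint whatsoever on $K$. The only condition inherited by the partial knot is $\det(K)^2 = \det(\text{target})$. In fact most of the partial knots actually needed in the paper's atlas have nonzero signature: $3_1$, $5_1$, $5_2$, $7_1$, $7_2$, $7_3$, $7_4$, $7_5$, $7_6$, $9_1$, $9_2$, $6_2$, $8_{21}$, $3_1\#3_1$. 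With your restriction the search would never even form, for example, the $5_2$ template, which alone accounts for all 15 symmetric ribbon knots of determinant 49, so you would certify far fewer than 122 knots. Drop that condition, enumerate all candidate partial knots of bounded crossing number with the correct determinant, and your argument goes through exactly as in the paper; note also that mirror variants of $K$ must be allowed, since it is the symmetric union, not $K$, whose signature vanishes.
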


The `symmetry status' of the remaining 15 prime ribbon knots is currently unknown. We discuss their properties
in Section \ref{sec:Ribbons}. 

%We note that it is still not known whether the Conway knot 11n34, a mutant of the
%Kinoshita-Terasaka knot 11n42, is smoothly slice and we exclude this knot from the discussion.

\small
\noindent
\begin{table}[htbp]
\begin{tabular}{|lrl|lrl|lrl|lrl|}
\hline
11a28   & 121	&	m	& 12a100	& 225	&	sym	&	12a1029	& 81	&	m	&	12n312	& 49	&	m	\\
11a35   & 121	&	m	& 12a173	& 169	&	m	&	12a1034	& 121	&	m	&	12n313	& 1	&	m	\\
11a36   & 121	&	m	& 12a183	& 121	&	m	&	12a1083	& 169	&	sym	&	12n318	& 1	&	m	\\
11a58   & 81	&	sym	& 12a189	& 225	&	sym	&	12a1087	& 225	&	sym	&	12n360	& 49	&	m	\\
11a87   & 121	&	m	& 12a211	& 169	&	m	&	12a1105	& 289	&	m	&	12n380	& 81	&	m	\\
11a96   & 121	&	m	& 12a221	& 169	&	m	&	12a1119	& 169	&	m	&	12n393	& 49	&	m	\\
11a103	& 81	&	--	& 12a245	& 225	&	sym	&	12a1202	& 169	&	m	&	12n394	& 25	&	m	\\
11a115	& 121	&	m	& 12a258	& 169	&	m	&	12a1225	& 225	&	--	&	12n397	& 49	&	m	\\
11a164	& 169	&	sym	& 12a279	& 169	&	m	&	12a1269	& 169	&	m	&	12n399	& 81	&	m	\\
11a165	& 81	&	--	& 12a348	& 225	&	--	&	12a1277	& 121	&	m	&	12n414	& 25	&	m	\\
11a169	& 121	&	m	& 12a377	& 225	&	sym	&	12a1283	& 81	&	m	&	12n420	& 81	&	m	\\
11a201	& 81	&	--	& 12a425	& 81	&	m	&	      	&	       &		     &	12n430	& 1	&	m	\\
11a316	& 121	&	m	& 12a427	& 225	&	m	&	12n4	& 81	&	m	&	12n440	& 81	&	m	\\
11a326	& 169	&	sym	& 12a435	& 225	&	sym	&	12n19	& 1	&	m	&	12n462	& 25	&	m	\\
            &	      &	   	& 12a447	& 121	&	m	&	12n23	& 9	&	m	&	12n501	& 49	&	m	\\
11n4    & 49	&	m	& 12a456	& 225	&	sym	&	12n24	& 49	&	m	&	12n504	& 121	&	m	\\
11n21   & 49	&	m	& 12a458	& 289	&	sym	&	12n43	& 81	&	m	&	12n553	& 81	&	m	\\
11n37   & 25	&	m	& 12a464	& 225	&	sym	&	12n48	& 49	&	m	&	12n556	& 81	&	m	\\
11n39   & 25	&	m	& 12a473	& 289	&	sym	&	12n49	& 81	&	sym	&	12n582	& 9	&	m	\\
11n42   & 1	&	m	& 12a477	& 169	&	m	&	12n51	& 9	&	--	&	12n605	& 9	&	m	\\
11n49   & 1	&	m	& 12a484	& 289	&	sym	&	12n56	& 9	&	--	&	12n636	& 81	&	m	\\
11n50   & 25	&	m	& 12a606	& 169	&	m	&	12n57	& 9	&	--	&	12n657	& 81	&	m	\\
11n67   & 9	&	--	& 12a631	& 225	&	sym	&	12n62	& 81	&	--	&	12n670	& 25	&	m	\\
11n73   & 9	&	--	& 12a646	& 169	&	m	&	12n66	& 81	&	--	&	12n676	& 9	&	sym	\\
11n74   & 9	&	--	& 12a667	& 121	&	m	&	12n87	& 49	&	m	&	12n702	& 121	&	m	\\
11n83   & 49	&	m	& 12a715	& 169	&	m	&	12n106	& 81	&	m	&	12n706	& 49	&	m	\\
11n97   & 9	&	--	& 12a786	& 169	&	sym	&	12n145	& 25	&	m	&	12n708	& 49	&	m	\\
11n116  & 1	&	m	& 12a819	& 169	&	m	&	12n170	& 81	&	m	&	12n721	& 25	&	m	\\
11n132  & 25	&	m	& 12a879	& 121	&	m	&	12n214	& 1	&	m	&	12n768	& 25	&	m	\\
11n139  & 9	&	m	& 12a887	& 289	&	sym	&	12n256	& 25	&	m	&	12n782	& 81	&	m	\\
11n172  & 49	&	m	& 12a975	& 225	&	sym	&	12n257	& 25	&	m	&	12n802	& 121	&	m	\\
             &	    &		& 12a979	& 225	&	sym	&	12n268	& 9	&	m	&	12n817	& 49	&	m	\\
12a3     & 169	&	sym	& 12a990	& 225	&	--	&	12n279	& 25	&	m	&	12n838	& 25	&	m	\\
12a54   & 169	&	m	& 12a1011	& 121	&	m	&	12n288	& 49	&	m	&	12n870	& 25	&	m	\\
12a77   & 225	&	sym	& 12a1019	& 361	&	m	&	12n309	& 1  	&	m	&	12n876	& 81	&	m	\\
\hline 
\end{tabular}
%\bigskip
\caption{A list of all 137 prime ribbon knots with crossing numbers 11 or 12 together with 
determinant and status information. Abbreviations: `m' = a minimal symmetric union diagram is known, 
`sym' = a symmetric union diagram is known but is not necessarily minimal, `--' = no symmetric 
union diagram has been found.}
\label{tab:RibbonKnotList}
\end{table}

\normalsize

\subsection{Finding a small number of diagram templates}
To save space in the appendix of this article, we tried to describe as many of the 122 diagrams as possible in 
a template form. For instance, all 15 ribbon knots with partial knot $5_2$ can be described in a uniform way
using a template with 3 twist parameters. In other cases several templates were needed and they are listed as
`version a', `version b' (for the trivial partial knot and for $6_1$, $6_2$, $7_3$, $7_5$) and, in 
addition, `version c' for $3_1$. We show the templates with one choice of twist parameters and 
underline the corresponding knot in the annotating list. If a diagram is used only for one ribbon knot, 
then we do not depict it as a template with parameters but as an individual symmetric union diagram. 
See, for example, the diagram for 12n706 with determinant 49 in the appendix.
The template parameters were checked with Knotscape to make sure that the knot types are correct.
Note that partial knots and the number of inserted twists on the axis agree with Seeliger's list, but our
diagrams need not be the same and may be symmetrically inequivalent. 
\enlargethispage{0.5cm}

An overview is given in Table \ref{tab:RibbonKnotList}. It contains all 137 prime ribbon knots 
with 11 and 12 crossings, their determinant and a `symmetry status' information. 

As an example, assume that you are interested in alternating ribbon knots with 11 crossings and determinant 169. 
In the first column of Table \ref{tab:RibbonKnotList} we find the two knots 11a164 and 11a326 with the 
information `sym' meaning that for these knots a symmetric but not necessarily minimal diagram exists.  
They are found in the appendix for the $7_3$ `version a'-template and for both knots
5 crossings are inserted on the axis, giving 19 crossings in total. Because the search was complete only
through 16 crossings, the diagrams need not be minimal.

As a second example, we look at all non-alternating knots in Table \ref{tab:RibbonKnotList} and observe that with 
the exception of two knots (12n49 and 12n676) all symmetric diagrams are minimal (they have 17 or less crossings).
This is in contrast to \textsl{alternating} knots where many symmetric diagrams with 18 or more crossings occur.

\subsection{Properties of the symmetric union presentations in the atlas}
To give some orientation, we comment on the properties of the partial knots, the occurence 
of composite ribbon knots and of strongly-positive-amphichiral knots in the atlas:

\smallskip
A) The following properties have been observed with respect to the partial knots:
\begin{itemize}
		\item (bridge number) Among the prime partial knots in the atlas only $8_{21}$ has bridge number 
		  greater than 2. Because all non-trivial prime knots with up to 7 crossings
			are two-bridge knots, this is not as surprising as one may think at first.
    \item (composite) There are symmetric unions with composite partial knots $3_1\#3_1$, $3_1\#-3_1$
		  and $3_1\#4_1$. These are interesting because if a symmetric union is constructed from a composite
			partial knot in another way it is itself composite.
		\item (non-alternating) There is only one non-alternating partial knot in the list -- again $8_{21}$ -- but
		  for determinants 1 and 9 we have non-alternating diagrams of the trivial knot and of the trefoil.
			All other partial diagrams in the atlas are alternating.
	\end{itemize}

\smallskip
B) We note a new phenomenon concerning composite ribbon knots: 
for crossing numbers $\le 10$ composite ribbon knots are always of the form $K\#-K$
with prime $K$, but for crossing numbers 11 and 12 other types of composite ribbon knots exist: 
$3_1\# 8_{10}$, $3_1\# 8_{11}$ and also $6_1\#3_1\#-3_1$, $3_1\#3_1\#-3_1\#-3_1$, see
\cite{Kearney}, \cite{LivingstonConcordanceGenus}.
Of course, in the cases $3_1\# 8_{10}$ and $3_1\# 8_{11}$ suitable mirror variants have to be chosen so 
that the total signature is zero. It is clear that $6_1\#3_1\#-3_1$ and $3_1\#3_1\#-3_1\#-3_1$
are symmetric because they are composed of symmetric ribbon knots. On the other hand,
$3_1\# 8_{10}$ and $3_1\# 8_{11}$ are interesting candidates for non-symmetric ribbon knots.

\smallskip
C) A knot is called strongly-positive-amphichiral if it has a diagram which is mapped to its mirror image by
a rotation of $\pi$, preserving the orientation.  

Obviously, composite knots consisting of a knot and its mirror image are strongly-positive-amphichiral. 
Prime strongly-positive-amphichiral knots are rare, however:
In \cite{BoocherDaigleHosteZheng}, p. 482, it was stated that according to \cite{HosteThistlethwaiteWeeks} there 
are only three prime knots with 12 or fewer crossings that are strongly-positive-amphichiral: $10_{99}$, $10_{123}$ 
and 12a427.
Contrary to this, Seeliger's search produced diagrams for 12a1202 and 12n706 showing this symmetry. 
It occurs for a symmetric union diagram if in addition to its symmetry axis it possesses a rotational symmetry
axis perpendicular to the plane, provided that the knot's orientation is preserved under rotation by $\pi$. 
We found such diagrams later also for 12a1019 and 12a1105 (shown in the appendix). Therefore we ask:

\enlargethispage{1cm}

\begin{question}
Is the list of prime strongly-positive-amphichiral knots with at most 12 crossings, consisting of 
$10_{99}$, $10_{123}$, 12a427, 12a1019, 12a1105, 12a1202 and 12n706, complete?
Using Knotinfo data, the only undecided case seems to be 12a435.
\end{question}

%Remark: As the only candidate the knot 12a435 remains: According to Knotinfo there are 10 prime knots 
%which are positively- or fully-amphichiral with zero signature and a square number as determinant. 
%Two of them ($8_9$ and 12a477) do not satisfy the $\Delta(t) = f(t)^2$-condition for strongly-positive-amphichiral 
%knots. We can not exclude the knot 12a435 because it shares several knot polynomial invariants with 12a427, which 
%is strongly-positive-amphichiral.

In the next section we analyse the 15 ribbon knots which were not found to be symmetric unions
and try to find common properties which may lead to a topological explanation.

\clearpage
\newpage
\section{Analysing non-symmetric ribbons} \label{sec:Ribbons}
The 15 prime ribbon knots for which a symmetric union presentation is missing have
determinants 9 (7 cases), 81 (5 cases) and 225 (3 cases). For the cases with determinants 
9 and 81 we found two non-symmetric ribbons (only two ribbons covering all 12 cases)
suggesting that these knots indeed share common topological obstructions that prevent 
their presentation as symmetric unions. The 3 cases with determinant 225 are also covered
by two ribbons.

\subsection{A non-symmetric ribbon for the remaining ribbon knots with determinant 9} \label{sub:det9}
For the 7 knots with determinant 9 (i.e.\,11n67, 11n73, 11n74, 11n97, 12n56 and 12n57) 
we found the ribbon $\mathcal{A}$ shown in Figure \ref{fig:ribbonTransformation}. 
The figure also shows an effort to symmetrise the diagram:
We put the tangle with $T$ half-twists on the symmetry axis indicated by the tangle $3_1\#-3_1$ 
in the diagram's upper part. Then the $S$-tangle appears as a `symmetry defect' on the left side. 
This indicates that for some values of $S$ it might be possible to obtain a symmetric diagram
but for others it might not be possible.

\begin{figure}[hbtp]
  \centering
  \includegraphics[scale=\scaling]{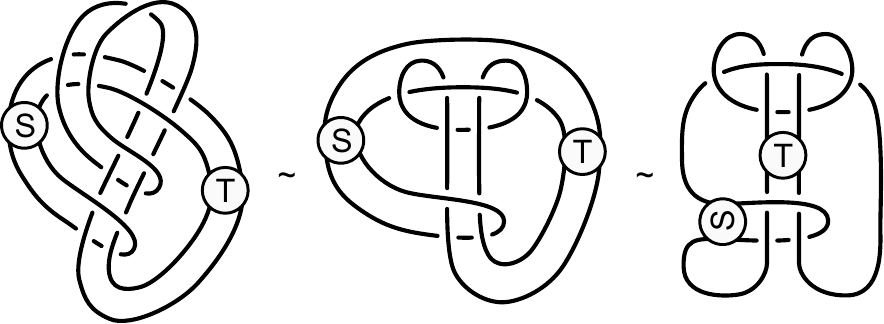}
  \caption{The ribbon $\mathcal{A}(S,T)$ and two transformations}
  \label{fig:ribbonTransformation}
\end{figure} 

Varying the twist parameters $S$ and $T$ we obtain the knots in Table \ref{fig:knotsDet9}.
For the values $S=-1,0$ they can indeed be presented as symmetric unions.

% belongs to the following table
\renewcommand{\arraystretch}{1.1}

\noindent
\begin{table}[htbp]
\begin{tabular}{|c|c|c|c|c|c|}
\hline
S $\backslash$ T &$-2$&$-1$&0&1&2 \\
\hline
$-4$ &  & 13n1666 & 13n1313 & 13n1674 & \\
$-3$ & & 12n51 & 12n56 & 12n268 & \\
$-2$ & 13n612 & 11n67 & 11n74 & 11n97 & 13n973 \\
\hline
$-1$ & $10_{140}$ & $9_{46}$ & $8_{20}$ & $6_1$ & $3_1\#-3_1$ \\
 0 & $3_1\#-3_1$ &  $6_1$ & $8_{20}$ & $9_{46}$ & $10_{140}$ \\
\hline
 1 & 13n835 & 11n97 & 11n73 & 11n67  & 13n611 \\
 2 & & 12n268 & 12n57 & 12n51 & \\
 3 & & 13n1674 & 13n1316 & 13n1666 & \\
\hline
\end{tabular}
%\bigskip
\caption{Overview of knot types for the ribbon $\mathcal{A}(S,T)$ with determinant 9 
(knots and their mirror images are not distinguished)}
\label{fig:knotsDet9}
\end{table}
% belongs to the previous table
\renewcommand{\arraystretch}{1.0}

\noindent
With the exception of 12n268, for none of the other knots in the table was a symmetric diagram found.
We discuss the knots in Table \ref{fig:knotsDet9} in more detail: 
\begin{itemize}
\item The 11 and 12 crossings knots in the table (with the exception of 12n268) are the ribbon knots with
  determinant 9 for which no symmetric union presentation was found.
\item For the knot 12n268 a symmetric diagram is known, see template $3_1$, version b.
\item For $S=0$ the ribbon can be symmetrised, as shown in Figure \ref{fig:ribbonTransformation_S0}.
  The case $S=-1$ is similar and results in a total number of half-twists on the axis of $T-2$ instead of $T+2$. 
	These knots are generated by template $3_1$, version a.
\item In the column $T=0$ the knots 11n73 and 11n74 are mutants, the same is true for 12n56 and 12n57. 
\end{itemize}

\begin{figure}[hbtp]
  \centering
  \includegraphics[scale=\scaling]{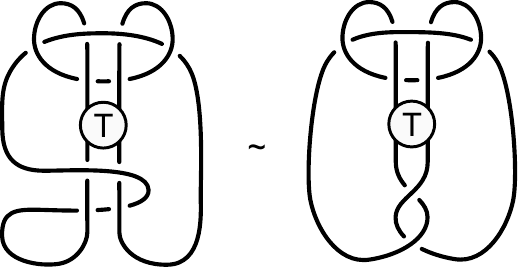}
  \caption{Symmetrising the ribbon $\mathcal{A}(S,T)$ in the case $S=0$}
  \label{fig:ribbonTransformation_S0}
\end{figure} 

%\noindent
The observation about mutants can be generalised (if an unoriented knot diagram 
is obtained from $D$ by switching all crossings we denote it by $D^*$):
\begin{proposition}\label{mutants}
The knots $\mathcal{A}(-S,-T)$ and $\mathcal{A}(S-1,T)^*$ are mutants.
\end{proposition}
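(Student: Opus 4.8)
The plan is to prove Proposition \ref{mutants} by exhibiting an explicit mutation—a cut-out of a tangle followed by a rotation and re-glueing—that carries the ribbon diagram $\mathcal{A}(-S,-T)$ to the diagram $\mathcal{A}(S-1,T)^*$. Since mutation is witnessed diagrammatically, the entire argument reduces to a careful bookkeeping of how the two twist parameters $S$ and $T$ transform under the diagram symmetries already visible in Figure \ref{fig:ribbonTransformation}. First I would isolate the relevant $2$-string tangle in $\mathcal{A}(S,T)$: the diagram is assembled from a fixed tangle (the $3_1\#3_1^*$ block on the axis together with the surrounding ribbon band) into which the twist regions $S$ and $T$ are inserted. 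I would identify a disk $D$ meeting the diagram in exactly four points whose interior contains everything \emph{except} one of the two twist regions, so that rotating $D$ by $\pi$ about a suitable axis implements the desired mutation.

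The heart of the argument is to track the effect of the three natural involutions on the parameter pair $(S,T)$. Taking the mirror image of the whole diagram negates the signs of both twist regions, sending $\mathcal{A}(S,T)$ to $\mathcal{A}(-S',-T')$ for the appropriate relabelling; this explains the appearance of the mirror operation $(\,\cdot\,)^*$ and the sign reversals $S\mapsto -S$, $T\mapsto -T$ in the statement. The shift $S\mapsto S-1$ is the subtler ingredient: I expect it to arise from the ``symmetry defect'' $S$-tangle described in the text, which sits asymmetrically on the left side. When the mutating rotation is performed, the defect region is moved across the symmetry axis, and a single crossing is absorbed or released on the axis (of the type recorded by the transformation in Figure \ref{fig:ribbonTransformation_S0}, where the half-twist count on the axis changes by two). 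The cleanest way to verify the precise offset is to compare the two diagrams on the small cases already tabulated—for instance, reading off from Table \ref{fig:knotsDet9} that $\mathcal{A}(0,0)$ and $\mathcal{A}(-1,0)^*$ should coincide up to mutation, and likewise checking a $T\neq 0$ row—thereby pinning down that the shift is exactly $-1$ rather than some other constant.

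The main obstacle I anticipate is establishing that the rotation really is a mutation in the technical sense, i.e.\ that the four-punctured sphere $\partial D$ bounds the chosen tangle on both sides with matching endpoints, and that after the $\pi$-rotation the two tangles reglue to give a \emph{planar-isotopic} diagram (up to the claimed parameter change) rather than merely a knot of the same type by coincidence. Concretely, one must confirm that the ribbon band and the $3_1\#3_1^*$ block are symmetric enough that the rotation fixes their boundary configuration setwise, so that the glueing is legitimate; mutation is a statement about tangle replacement, not about abstract knot equivalence, so the geometric verification cannot be outsourced to an invariant computation. Once the mutating disk is correctly located and the boundary-matching is checked, the parameter transformation $(S,T)\mapsto(-S,-T)$ versus $(S-1,T)^*$ follows by the sign-and-shift bookkeeping above, and mutancy is immediate from the definition. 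As a consistency check I would verify that the proposition is compatible with the known mutant pairs listed in the text: the entries $11n73$/$11n74$ and $12n56$/$12n57$ in the column $T=0$ should be exactly the instances $\mathcal{A}(S,0)$ and $\mathcal{A}(-S-1,0)^*$ predicted by the formula, which both confirms the offset $-1$ and serves as evidence that no tangle has been mis-identified.
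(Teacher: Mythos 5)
Your plan is essentially the paper's own proof: the paper also mirrors $\mathcal{A}(-S,-T)$ in the plane (negating both twist parameters), observes that the upper part containing the $3_1\#3_1^*$ block agrees with $\mathcal{A}(S,T)$ only after a mutation by a vertical flip, and obtains the offset $S\mapsto S-1$ by absorbing one half-twist, so your mirror-plus-tangle-rotation bookkeeping is the same argument. The only minor discrepancy is where you locate the absorbed crossing: the paper takes the half-twist from the lower $S$-tangle (to reposition the overcrossing arc), not from the axis, a detail your proposed checks against Table \ref{fig:knotsDet9} would pin down correctly.
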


\begin{proof}
Let $S \ge 1, T \ge 0$. Start with $\mathcal{A}(-S,-T)$ and mirror the diagram shown on the right 
in Figure \ref{fig:ribbonTransformation} in the plane. This almost yields $\mathcal{A}(S,T)$:
the upper part is exactly as in $\mathcal{A}(S,T)$ if it is mutated by a vertical flip. From the tangle with 
$S$ crossings in the lower part one half-twist is needed to put the overcrossing arc in the right 
position. This results in $\mathcal{A}(S-1,T)^*$. 
\end{proof}

In addition to the mutant pairs (11n73, 11n74) and (12n56, 12n57) this explains 
the pairs (13n611, 13n612), (13n835, 13n973) and (13n1313, 13n1316). 
For $T=\pm 1$, however, the mutation seems to preserve the knot types.
Whether this is true for all $S$ is an open question.  

We have the following conjecture about this family. It is supported also by the knots with 13 crossings in the table. 

\begin{conjecture}\label{conj:det9}
With the exception of 12n268 and the knots with $S=-1, 0$, 
the knots $\mathcal{A}(S,T)$ do not have symmetric union presentations.
\end{conjecture}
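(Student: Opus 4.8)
The plan is to reduce the statement to an obstruction about the \emph{partial knot} and then attack that obstruction with the refined Jones polynomial of \cite{EisermannLamm2007}, \cite{EisermannLamm2011}. First I would use the fact recalled in Section~\ref{sec:Background}: a symmetric union has determinant equal to the square of the determinant of its partial knot. Since every knot $\mathcal{A}(S,T)$ in Table~\ref{fig:knotsDet9} has determinant $9$, any hypothetical symmetric union presentation must have a partial knot of determinant $3$, and the partial knot of minimal complexity with this property is the trefoil $3_1$ (more generally a connected sum of $3_1$ with knots of determinant $1$, an option that the primeness of the $\mathcal{A}(S,T)$ constrains severely). Thus the task is not to rule out \emph{all} symmetric unions but only those with partial trefoil. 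Note that the Fox--Milnor and Eisermann ribbon conditions give no leverage here: the $\mathcal{A}(S,T)$ are ribbon by construction, so these necessary conditions are automatically met.

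The main tool I would use is the two-variable Jones polynomial $W_D(s,t)$ associated in \cite{EisermannLamm2011} to a symmetric union diagram $D$, whose specialisation $W_D(1,t)$ recovers the ordinary Jones polynomial $V(t)$ of the represented knot and in which the variable $s$ records the half-twists inserted on the axis. For a fixed partial knot $3_1$ the axis tangle contributes only a controlled, computable factor, so the ordinary Jones polynomials realisable by a partial-trefoil symmetric union form an explicitly describable family $\{\,W(1,t) : W \text{ realisable}\,\}$. The concrete step is then to compute $V_{\mathcal{A}(S,T)}(t)$ from the tangle decomposition in Figure~\ref{fig:ribbonTransformation} and to decide membership in this family. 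I would expect the exceptional parameters $S=-1,0$ and the knot 12n268 to land inside it, consistent with the symmetric diagrams already exhibited for them, while for the remaining parameters one would try to show that $V_{\mathcal{A}(S,T)}(t)$ cannot be written in the required form.

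The hard part will be twofold. Proposition~\ref{mutants} shows that the family contains genuine mutant pairs, and since the Jones polynomial (like HOMFLY and the signature) is a mutation invariant, no such invariant can distinguish 11n73 from 11n74 or 12n56 from 12n57; the obstruction must therefore be a \emph{property} of the Jones value itself---whether it lies in the partial-trefoil locus---rather than a comparison between two knots, and even then a mutation-sensitive invariant (Khovanov homology, or the double branched cover) may be needed to finish the mutant cases. More seriously, there is at present no invariant known to be a \emph{complete} obstruction to being a symmetric union, which is exactly why the existence of non-symmetric ribbons is open; all standard necessary conditions are already satisfied by these knots. A genuine proof must therefore either produce a new obstruction or bound the complexity of any hypothetical symmetric presentation, turning Seeliger's finite search \cite{Seeliger} into a complete argument. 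The most promising route to such a bound is to pass to the double branched cover: for a partial-trefoil symmetric union $L$ the manifold $\Sigma_2(L)$ is obtained from $L(3,1)\#L(3,\pm1)$ by a surgery determined by the axis twists, and one would try to identify $\Sigma_2(\mathcal{A}(S,T))$ as a Seifert-fibered or graph manifold that cannot arise from such a surgery. Controlling \emph{all} such surgeries, for unbounded twisting, is where I expect the real difficulty to lie.
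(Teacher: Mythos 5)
You should first be aware of the status of the statement itself: it is Conjecture~\ref{conj:det9}, and the paper offers no proof of it. The only support given is empirical (Seeliger's exhaustive search through 16 crossings found no symmetric diagrams for these knots, and the 13-crossing entries of Table~\ref{fig:knotsDet9} behave consistently), and the paper says explicitly, at the end of Section~\ref{sub:det225}, that effective tools for tackling these conjectures do not yet exist. Your proposal, as you candidly admit in your last paragraph, is a research programme rather than a proof: at no point do you produce the new obstruction that would be needed, and that missing obstruction is not a detail to be filled in later --- it is precisely the open problem (no invariant is currently known that can certify that a ribbon knot admits no symmetric union presentation of any diagram complexity). So the proposal cannot be judged a proof of the statement, correct or otherwise; it is an outline whose decisive step is acknowledged to be absent.

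Beyond this, several intermediate claims would fail even as steps of the programme. The reduction to partial knot $3_1$ is unjustified: the determinant condition forces the partial knot to have determinant $3$, but infinitely many knots do (already $8_{19}$ among small prime knots), and primeness of $\mathcal{A}(S,T)$ does not constrain the partial knot in the way you suggest --- the paper's own atlas contains prime symmetric unions with composite partial knots ($12n553$ with partial knot $3_1\#3_1$, $12n556$ with $3_1\#3_1^*$), since twists inserted on the axis can destroy the connected-sum structure. Next, the refined polynomial $W_D(s,t)$ of \cite{EisermannLamm2011} is an invariant of a symmetric union \emph{diagram} up to symmetric equivalence, not of the underlying knot; the specialisation recovering the ordinary Jones polynomial is $s=t$, not $s=1$, and more importantly the set of Jones polynomials realisable by partial-trefoil symmetric unions is not ``explicitly describable'': the half-diagram may be an arbitrary diagram of the trefoil and arbitrarily many twist tangles may sit at arbitrary positions on the axis, so the axis does not contribute ``only a controlled, computable factor''. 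The same unboundedness defeats the double-branched-cover strategy as stated: one would have to exclude \emph{all} surgery descriptions arising from all such configurations, which is exactly the difficulty you flag but do not overcome. Finally, note that your mutation remark, while correct (and consistent with Proposition~\ref{mutants}), cuts against the Jones-polynomial route from the start: since $11n73$/$11n74$ and $12n56$/$12n57$ are mutants with equal Jones polynomials, any obstruction formulated purely as a property of $V$ treats each pair identically, which is acceptable only because both members are conjectured non-symmetric --- but it shows the locus test carries no more information than one computation per mutant class and cannot be sharpened within that framework.
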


\begin{figure}[hbtp]
  \centering
  \includegraphics[scale=\scaling]{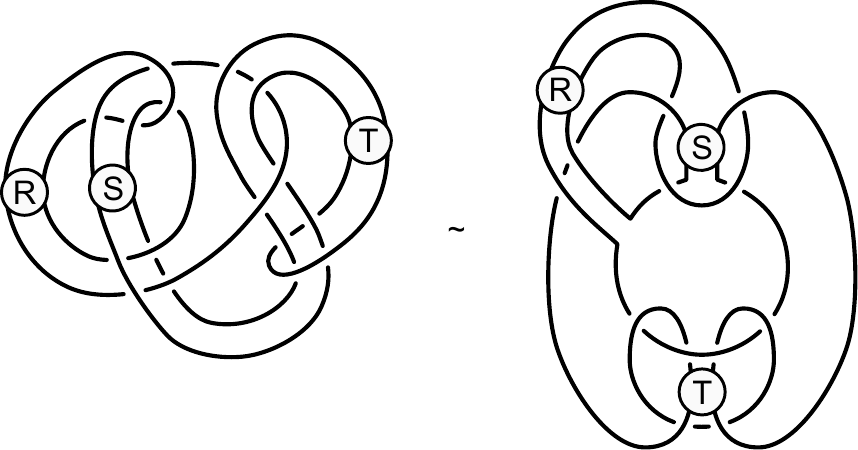}
  \caption{The ribbon $\mathcal{B}(R,S,T)$ for knots with determinant 81}
  \label{fig:ribbon11a103}
\end{figure} 

%%%%%%%%%%%%%%%%%%%%%%%%%%%%%%%%%%%%%%%%%%%%%%%%%%%%%%%
\subsection{A non-symmetric ribbon for the remaining ribbon knots with determinant 81} \label{sub:det81}

For the 5 knots with determinant 81 (i.e.\,11a103, 11a165, 11a201, 12n62 and 12n66) 
we found the ribbon $\mathcal{B}$ shown in Figure \ref{fig:ribbon11a103}. A transformation 
to a diagram with more symmetry is possible as well and we also detect a `symmetry defect'.
(By this we mean the mixed occurence of symmetry and its violation, similar to the case in 
Figure \ref{fig:saddle_transformation_12a1225}. Proposition \ref{prop:Aceto} contains a 
rigorous characterisation of non-symmetric bands.)

Varying the twist parameters $R$, $S$ and $T$ we obtain the knots in Table \ref{tab:ribbon11a103}.
The left column contains the 5 knots with 11 and 12 crossings which are candidates for 
non-symmetric ribbon knots. In the middle column we show, in addition, knots with 13 crossings 
for which no symmetric presentation is known and the right column contains ribbon 
knots with known symmetric union presentations.

% belongs to the following table
\renewcommand{\arraystretch}{1.1}

\noindent
\begin{table}[htbp]
\begin{tabular}{|r|r|r|l||r|r|r|l||r|r|r|l|}
\hline
R & S & T& knot & R & S & T& knot & R & S & T& knot \\
\hline
 -1 & 0 & -1 &                                                            &  -2 & 0 & -1 & 13n403  &  1 &-1 & -1 & $10_{99}$ \\
  0 & 1 &  0 &\raisebox{1.5ex}[0pt]{$\bigg\}$11a103}  &   2 & 0 & -1 & 13n405  & -1 &0 & 0  &  11a58 \\
  0 & -1 & -1 &                                                           &   1 & 1 &  0 & 13n415  &  1 & 0 & -1 &   12n49 \\
 -1 & 2 &  0 & \raisebox{1.5ex}[0pt]{$\bigg\}$11a165} &   2 & 0 &  0 & 13n440  & 1 &-1 & 0 &  12n440 \\
  0 &-1 &  0 & \phantom{x} 11a201&  0 &2 &  0 & 13n1194 &  -2 & 0 &  0  & 13n436\\
  1 & 0 &  0 & \phantom{x} 12n62 &  -1&-1 & -1 & 13n1198 &   1 &  1 & -1 & 13n447\\
 -2 & 1 & -1 & \phantom{x} 12n66 &   1 &  0 & 1 & 13n1336 & -1 & -1 & 0  & 13n1156\\
&&&                         &  -1 &  0 & 1 & 13n1680 &  1 & -1 & 1  & 13n2199\\
 0 & 0 & -1 & \phantom{x} $3_1\#8_{11}$ &   0 & 1 &  1 &  13n1971 &&&&\\
 0 & 0 &  0 & \phantom{x} $3_1\#8_{10}$ &   0 & -1 & 1 &  13n3058 &&&&\\
\hline
\end{tabular}
%\bigskip
\caption{Overview of knot types for the ribbon $\mathcal{B}(R,S,T)$ with determinant 81. 
Knots in the right column are symmetric.}
\label{tab:ribbon11a103}
\end{table}
% belongs to the previous table
\renewcommand{\arraystretch}{1.0}

We interpret the fact that this family contains symmetric and possibly non-symmetric ribbon knots
in the following way: the knots in the table cannot be transformed to a symmetric union in a way that their 
ribbon is respected. However, for some of them there exists a symmetric union presentation with another ribbon.
Put another way, the knots in the right block in Table \ref{tab:ribbon11a103} correspond to the case 12n268 in 
the previous section.

\bigskip
\noindent
For $R=S=0$ the ribbon knots are composite and we obtain for $T=-3,\ldots,3$ the knots
$3_1\#11n122$, $3_1\#10_{143}$, $3_1\#8_{11}$, $3_1\#8_{10}$, $3_1\#10_{147}$, $3_1\#11n106$, $3_1\#12n444$.

%\medskip
%\renewcommand{\arraystretch}{1.1}
%\begin{tabular}{|c|c|c|c|c|c|c|}
%\hline
%$-3$ & $-2$ & $-1$ & 0 & 1 & 2 & 3 \\
%\hline
%$3_1\#11n122$ & $3_1\#10_{143}$ & $3_1\#8_{11}$ & $3_1\#8_{10}$ & $3_1\#10_{147}$ & $3_1\#11n106$ & $3_1\#12n444$ \\
%\hline
%\end{tabular}
%\renewcommand{\arraystretch}{1.0}

\medskip
These composite ribbon knots consist of a trefoil and a knot concordant to the mirrored trefoil.
Because of their striking non-symmetry, we think that they do not have symmetric union 
presentations and that proving this might be easiest among our candidates.

\begin{conjecture}\label{conj:det81}
The (composite) knots $\mathcal{B}(0,0,T)$ are non-symmetric ribbon knots;
less confident than for the determinant 9 cases: the knots 11a103, 11a165, 11a201, 
12n62 and 12n66 are non-symmetric ribbon knots. And, with specific ribbon: 
no knot $\mathcal{B}(R,S,T)$ can be transformed to a symmetric union respecting its ribbon. 
\end{conjecture}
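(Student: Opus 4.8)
The plan is to attack the three assertions of the conjecture separately, since they differ sharply in both technique and plausibility. First I would dispose of the obvious necessary conditions so as to locate where the real content lies: every $\mathcal{B}(R,S,T)$ has determinant $81 = 9^2$, so the square-determinant condition of Section~\ref{sec:Background} is satisfied and \emph{cannot} be used; likewise, being ribbon, each of these knots has metabolic linking form on its double branched cover, so that classical slice obstruction is also vacuous here. Hence any proof must come from an invariant that separates symmetric unions from arbitrary ribbon knots. The natural candidate is the refined two-variable Jones polynomial $W(s,t)$ of Eisermann--Lamm \cite{EisermannLamm2011}: it is an invariant of the symmetric union \emph{diagram}, its diagonal recovers the ordinary Jones polynomial $W(t,t)=V(K)$, and its specialisation $W(s,1)$ is forced to be the palindromic product $V(K_0)(s)\,V(K_0)(s^{-1})$ of the determinant-$9$ partial knot $K_0$. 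The working strategy throughout is to test whether the Jones polynomials of the candidate knots can simultaneously occupy both specialisations for a single admissible $W$.

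For the first and most confident assertion -- that the composite knots $\mathcal{B}(0,0,T)$ are non-symmetric -- I would exploit compositeness directly. By the observation in part~A of Section~\ref{sec:Tabulation}, a symmetric union built from a composite partial knot is itself composite; here the factors are an honest trefoil together with a knot concordant to $-3_1^*$, and I would enumerate the finitely many determinant-$9$ partial knots $K_0$ (for instance $3_1\#3_1$, $6_1$, $8_{20}$, $9_{46}$) and show that for none of them does the resulting symmetric union reproduce the connected sum $3_1\#(\cdots)$. The decisive feature is the $3_1$ summand: the constraint that $W(s,1)$ equal $V(K_0)(s)\,V(K_0)(s^{-1})$ should be incompatible with the diagonal value $V(3_1)(t)\,V(8_{10})(t)$ (and its $T$-variants) being realised by the same $W$, for every admissible $K_0$. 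This turns the first assertion into a finite, computable check.

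The third assertion -- that no $\mathcal{B}(R,S,T)$ can be symmetrised \emph{respecting its ribbon} -- is the most structural and, I expect, the most tractable rigorously. Here I would invoke Aceto's characterisation of symmetric bands (Proposition~\ref{prop:Aceto} of Section~\ref{sec:banddiagrams}), read off the band/ribbon data of $\mathcal{B}(R,S,T)$ from Figure~\ref{fig:ribbon11a103}, and isolate the ``symmetry defect'' formed by the off-axis $R$- and $S$-tangles as a genuine violation of Aceto's condition. Concretely I would pin down the relevant band moves -- the ribbon-respecting equivalence alluded to in Section~\ref{sec:Introduction} -- extract from them an invariant of the ribbon \emph{disk} rather than of the underlying knot, and verify that the defect is non-trivial for all $(R,S,T)$. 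This would rule out any ribbon-preserving symmetrisation without confronting the far harder question of symmetry achieved through a \emph{different} ribbon.

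The genuine obstacle is the middle, ``less confident'' assertion: that the \emph{prime} knots 11a103, 11a165, 11a201, 12n62 and 12n66 admit no symmetric union whatsoever. Since the very existence of non-symmetric ribbon knots is open (Section~\ref{sec:Introduction}), there is no off-the-shelf tool: the determinant is a perfect square, the linking form is metabolic, and the ordinary Jones polynomial is almost certainly too weak. The hard part is to manufacture an invariant sensitive to the \emph{axial symmetry} of a symmetric union rather than merely to sliceness -- plausibly through the equivariant structure induced on $\Sigma_2(K)$ by the symmetry axis, or through Casson--Gordon or twisted Alexander invariants tuned to the $\mathbb{Z}/3$-summands of $H_1(\Sigma_2(K))$. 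I would not expect to complete this step; the realistic deliverables are the composite family and the ribbon-respecting statement, with the prime case flagged as the essential open problem toward which the conjecture is really pointing.
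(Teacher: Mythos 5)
You are attempting to prove a statement that the paper itself presents only as a \emph{conjecture}: there is no proof of Conjecture~\ref{conj:det81} anywhere in the paper, and immediately after introducing the ribbons $\mathcal{C}$ and $\mathcal{D}$ the author states explicitly that ``effective tools for tackling our conjectures do not yet exist.'' So the honest benchmark is not a hidden argument you failed to reproduce, but whether your programme could actually close either of the two parts you call ``deliverable''~--- and there are concrete gaps in both. For the composite family $\mathcal{B}(0,0,T)$, your ``finite, computable check'' fails at the first step: the determinant does not bound the crossing number, so there are infinitely many knots of determinant $9$ that could serve as partial knot $K_0$, not just $3_1\#3_1$, $6_1$, $8_{20}$, $9_{46}$; and for each fixed $K_0$ there are infinitely many candidate symmetric union diagrams. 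Worse, the refined polynomial $W(s,t)$ of \cite{EisermannLamm2011} is an invariant of a symmetric union \emph{diagram} (up to symmetric equivalence), not of the underlying knot, so for a knot not yet known to be a symmetric union there is no ``$W$'' to compute: your test would have to quantify over all hypothetical admissible $W$ with prescribed diagonal $W(t,t)=V(K)$ and specialisation $W(s,1)=V(K_0)(s)V(K_0)(s^{-1})$, and no realisation or non-realisation theorem for such $W$ exists. Note also that the knots in the right block of Table~\ref{tab:ribbon11a103} (e.g.\ $11a58$, $12n49$, $12n440$) \emph{do} admit symmetric unions with determinant-$9$ partial knots, so the two specialisation constraints are demonstrably compatible in closely neighbouring cases; nothing in the Jones-polynomial data of $3_1\#8_{10}$ is known to obstruct this.

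The gap in your third part is of the same nature. Proposition~\ref{prop:Aceto} characterises symmetric bands by the \emph{existence} of some band diagram without crossings and junctions; Figure~\ref{fig:auxiliary_piece} shows only that the particular diagram of $\mathcal{B}(R,S,T)$ in Figure~\ref{fig:ribbon11a103} conceals a junction or crossing in its auxiliary piece. To conclude that the ribbon cannot be symmetrised \emph{respecting the band}, you must show that no diagram reachable from it by band moves is crossing- and junction-free~--- and the required calculus of band moves, together with invariants of the ribbon disk that survive those moves, is precisely what Section~\ref{sec:banddiagrams} defers to a future article (cf.\ the open question about the three bands for $12n268$). Your plan to ``extract an invariant of the ribbon disk'' from the band moves and ``verify that the defect is non-trivial for all $(R,S,T)$'' therefore presupposes the tool it is meant to construct; as written it is a restatement of the conjecture, not a proof strategy with a known endgame. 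Your assessment of the middle assertion (the five prime knots) as currently inaccessible is accurate and consistent with the paper.
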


\newpage
%%%%%%%%%%%%%%%%%%%%%%%%%%%%%%%%%%%%%%%%%%%%%%%%%%%%%%%
\subsection{Non-symmetric ribbons for the remaining ribbon knots with determinant 225} \label{sub:det225}
For the three knots 12a348, 12a990 and 12a1225 we found the two ribbons shown in Figure \ref{fig:ribbons_C_D}. 
Ribbon $\mathcal{C}$ yields for parameters $(S,T)=(0,1)$ the knot 12a990 and for $(2,0)$ the knot 12a1225. 
No knots with 13 crossings have been found for other parameters.

The knot 12a348 is obtained for the ribbon $\mathcal{D}$ with parameters $(-2,1,-1)$ and $(1,-2,0)$ and additional
ribbon knots with 13 crossings are reported in Table \ref{tab:ribbon12a348}. For none of these a symmetric
union presentation has been found.

% belongs to the following table
\renewcommand{\arraystretch}{1.1}

\noindent
\begin{table}[htbp]
\begin{tabular}{|r|r|r|l||r|r|r|l|}
\hline
R & S & T& knot & R & S & T& knot \\
\hline
 -2 & 1 & -1 &                                                             &  -1 & -1 &  0 & \phantom{x} 13a549 \\
  1 & -2 &  0 &\raisebox{1.5ex}[0pt]{$\bigg\}$12a348}  &  0 & 0 & -1 & \phantom{x} 13a556   \\
  & &  &                                                                      &  -2 & 0 &  0 & \\
  -2 & 0 & -1 & \phantom{x} 13a414                             & 1 & -1 & -1 &\raisebox{1.5ex}[0pt]{$\bigg\}$13a907}   \\
  0 & 0 & 0 &                                                              &  1 &-1 & 0 & \phantom{x} 13a1392 \\
  -1 & -1 & -1 &\raisebox{1.5ex}[0pt]{$\bigg\}$13a521} & & & &  \\
\hline
\end{tabular}
%\bigskip
\caption{Overview of knot types for the ribbon $\mathcal{D}(R,S,T)$.
}
\label{tab:ribbon12a348}
\end{table}
% belongs to the previous table
\renewcommand{\arraystretch}{1.0}

It seems plausible to conjecture that the ribbons $\mathcal{C}$ and $\mathcal{D}$ generate only non-symmetric 
ribbon knots. Effective tools for tackling our conjectures do not yet exist, though.

\begin{figure}[hbtp]
  \centering
  {\includegraphics[scale=0.7]{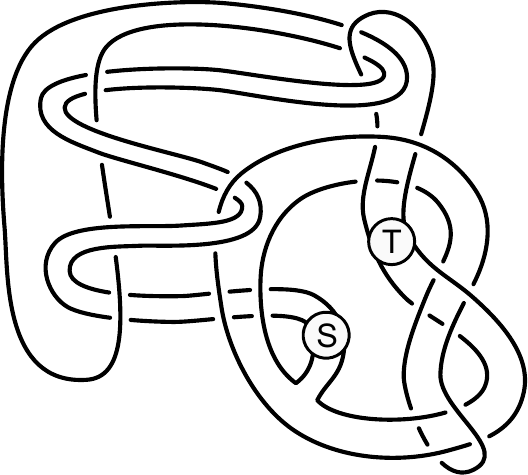}\qquad}
	{\includegraphics[scale=0.7]{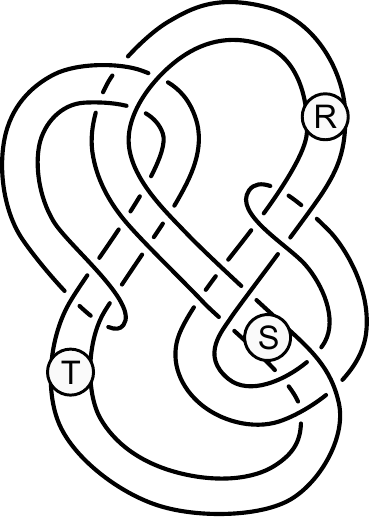}}
  \caption{The ribbons $\mathcal{C}(S,T)$ and $\mathcal{D}(R,S,T)$ for knots with determinant 225}
  \label{fig:ribbons_C_D}
\end{figure} 

%%%%%%%%%%%%%%%%%%%%%%%%%%%%%%%%%%%%%%%%%%%%%%%%%%%%%%%

\clearpage
\newpage
\section{Symmetric union diagrams as bands without crossings and junctions} \label{sec:banddiagrams}
In this section we characterise bands constructed from symmetric unions as bands composed 
of a reduced set of elementary pieces. First, we introduce a normalised form for ribbon bands: 
Every ribbon band can be represented by the elementary pieces of Figure \ref{fig:elementarypieces}, 
see \cite{Eisermann}. 
If we restrict this list of building blocks by omitting junctions and crossings, we obtain symmetric bands:

\begin{figure}[hbtp]
 {\includegraphics[scale=0.8]{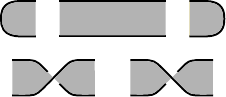}\quad}
 {\qquad\includegraphics[scale=0.8]{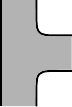}\quad}
 {\quad\includegraphics[scale=0.8]{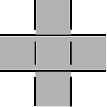}\quad}
 {\qquad\includegraphics[scale=0.8]{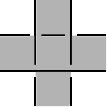}}
 \caption{Elementary pieces of band diagrams: ends, strip, twists, junction, crossing, ribbon singularity}
 \label{fig:elementarypieces}
\end{figure}

\begin{proposition}[\cite{Aceto}]\label{prop:Aceto}
A band can be constructed from a symmetric union diagram if and only if it has a band 
diagram composed of elementary pieces with neither crossings nor junctions.
\end{proposition}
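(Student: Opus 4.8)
The plan is to reduce the statement to a symmetry statement about the band, and then to match the two forbidden pieces against the two ways a band can fail to be symmetric. The tool for the reduction is the equivalence recorded in Section~\ref{sec:Background}: a band can be constructed from a symmetric union diagram if and only if it is symmetric with respect to a plane $P$ (a symmetric band-sum of a trivial link), the correspondence being realised by cutting the band open along its ribbon singularities and gluing bands back in. Granting this, it suffices to prove that a band admits an embedding symmetric across a plane $P$ if and only if it admits a band diagram built from ends, strips, twists and ribbon singularities alone. I would prove the two implications separately, reading the reflection across $P$ as the reflection of each elementary piece across its own vertical core axis.

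For the forward implication I would start from a symmetric union and its associated band, which by construction is symmetric across the axis plane $P$ and meets $P$ exactly along the symmetry axis. Slicing the band by the pencil of planes perpendicular to the axis exhibits it as a vertical stack of elementary pieces: a plain strip away from the interesting levels, a \emph{twist} at each crossing inserted on the axis (as in Figure~\ref{fig:achter_3D}), a \emph{ribbon singularity} wherever two sheets meet on the axis, and \emph{ends} at the top and bottom. Because the two mirror halves of the band are disjoint off $P$ and are joined only through these axis features, the spine of the band is a path rather than a branched tree, so no \emph{junction} occurs; and because each half lies flat on its own side of $P$, the projection has no off-axis self-intersection, so no \emph{crossing} occurs either.

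For the converse I would use the observation that the four admitted pieces are precisely the elementary pieces of Figure~\ref{fig:elementarypieces} that are invariant under reflection across the vertical core axis, while the two excluded pieces are exactly the ones that break this symmetry: reflecting a \emph{crossing} interchanges its two strands and hence its over/under information, and a \emph{junction} branches the band away from a single axis. Concretely, given a diagram with neither crossings nor junctions, the absence of junctions forces the spine to be a single unbranched strip, so the pieces compose linearly along one common axis; the absence of crossings then means that stacking them preserves the reflection symmetry that each piece individually enjoys. The resulting band is therefore symmetric across a plane, and Section~\ref{sec:Background} turns it back into a symmetric union. Equivalently, cutting such a diagram open at its ribbon singularities leaves a crossingless diagram, hence a trivial link, into which the (possibly twisted) bands glue back symmetrically, exactly as in Figure~\ref{fig:achter_3D_split}.

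The step I expect to be the main obstacle is the no-junction claim in the forward direction. A symmetric band-sum may use several bands ($m>1$ in the notation of Section~\ref{sec:Background}), and one must argue that this collection can always be organised linearly along the single symmetry axis, never forcing a branch; the examples built on two genuinely different axes, such as the movie for 12a1225 in Figure~\ref{fig:saddle_transformation_12a1225}, show that this is a real condition and not automatic. Closely related, and also delicate, is checking that the \emph{twist} piece is the correct reflection-invariant avatar of an inserted axis crossing, so that twists are admitted while generic self-crossings are not; this is what pins the symmetry plane to the axis and makes the piece-by-piece symmetry argument rigorous.
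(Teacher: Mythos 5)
Your overall architecture is the right one and matches the framework the paper uses: for the forward direction the paper likewise argues that a slight modification of the symmetric band's projection onto the symmetry plane uses only ends, strips, twists and ribbon singularities (cf.\ Figure~\ref{fig:achter_3D}), and for the converse your closing reformulation --- cut the crossing- and junction-free band open at its ribbon singularities, observe that the pieces are pairwise disjoint embedded flat disks so their boundaries form a trivial link, and reglue (possibly twisted) bands symmetrically as in Figure~\ref{fig:achter_3D_split}, invoking the equivalence of Section~\ref{sec:Background} --- is exactly the mechanism available here. (Note the paper itself only sketches the forward direction and attributes the converse to Aceto, so you are proving more than the text does.)

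There are, however, two genuine problems. First, the key lemma on which you rest the converse --- ``the four admitted pieces are precisely the elementary pieces of Figure~\ref{fig:elementarypieces} that are invariant under reflection'' --- is false for the \emph{twist} piece: a half-twist is chiral, and its mirror image (whether you reflect across the plane of the band diagram or across the piece's core axis) is the half-twist of the \emph{opposite} sign, so no twisted piece is setwise reflection-invariant. Twists are admissible not because they are symmetric but because the definition of symmetric unions, and of symmetric band-sums in Section~\ref{sec:Background}, explicitly licenses them as controlled symmetry violations on the axis (``for the inserted bands we allow twists''). Your two exclusions also use inconsistent reflections: for strips you reflect within the diagram plane across the core axis, while your crossing analysis (``interchanges over/under information'') tacitly reflects across the plane of the diagram; the correct symmetry throughout is reflection across the single plane $P$ in which the band diagram lies, which becomes the symmetry plane --- the flat pieces are then fixed pointwise, a crossing forces one strip entirely off $P$, and a twist must be absorbed into a glued band (so in your cut-and-reglue argument you should cut at the twists as well as at the singularities, to keep the trivial link genuinely symmetric). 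Second, the ``main obstacle'' you flag in the forward direction is not an obstacle at all: a band constructed from a symmetric union is by construction the union of the arcs joining mirror-image points over the half-diagram arc, i.e.\ an (arc)$\times$(interval) with unbranched spine, so a junction can never occur; the $m$ bands of the band-sum description are the small fusion bands glued in at the singular arcs (Figure~\ref{fig:achter_3D_split}), not branches of the spine, and the two-axis movie for 12a1225 in Figure~\ref{fig:saddle_transformation_12a1225} concerns a ribbon that does \emph{not} arise from a single symmetric union, so it bears on the knot's symmetry status, not on this implication.
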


One direction in the proof of this proposition is clear from an illustration as in Figure \ref{fig:achter_3D}:
a slight modification of a symmetric band's projection on the symmetry plane has neither crossings
nor junctions and only strips, ends, ribbon singularities and twists occur. Aceto observed that
the other direction is true as well and obtained an estimate concerning the number of ribbon
singularities in a specific family of ribbon knots.

We noticed earlier (see Figure \ref{fig:ribbonTransformation_S0}) that for $S=-1,0$ the bands
$\mathcal{A}(S,T)$ can be transformed to a symmetric band. Taking Proposition \ref{prop:Aceto}
into account, the symmetrisation of bands can be viewed as an elimination of crossings and junctions.
For this, we provide two band moves in the following example.

\begin{figure}[hbtp]
  \centering
  \includegraphics[scale=0.7]{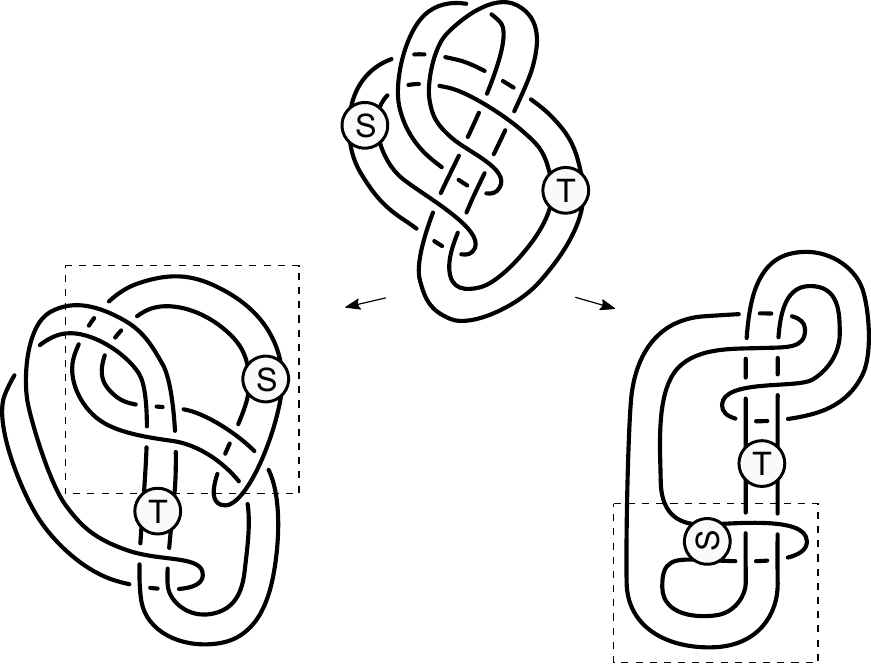}
  \caption{Preparation for the elimination of a crossing (left) or a junction (right)}
  \label{fig:prep_vanishing}
\end{figure}

\begin{figure}[hbtp]
  \centering
  \includegraphics[scale=\scaling]{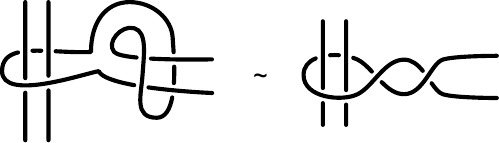}
  \caption{A band move useful for the elimination of a crossing}
  \label{fig:new_move}
\end{figure} 

\begin{figure}[hbtp]
  \centering
  \includegraphics[scale=\scaling]{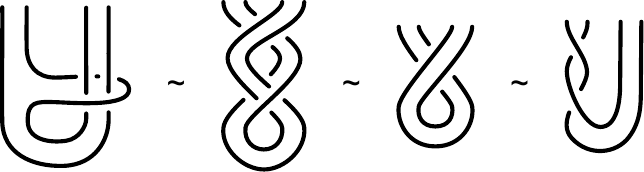}
  \caption{Removing a junction in $\mathcal{A}(-1,T)$ using Reidemeister 1 moves for bands}
  \label{fig:junction_move}
\end{figure}

\begin{example}
We illustrate this again with the ribbon $\mathcal{A}(S,T)$. 
As the case $S = 0$ is already shown in Figure \ref{fig:ribbonTransformation_S0},  we now choose the case $S = -1$.

Elimination of a crossing: In the left side of Figure \ref{fig:prep_vanishing} the diagram is 
modified so that for $S=-1,0$ the crossing in the upper part can be removed. 
The move which eliminates this crossing for $S=-1$ is shown in Figure \ref{fig:new_move}.

Elimination of a junction: In the right side of Figure \ref{fig:prep_vanishing} we transform the diagram in a similar
way as in Figure \ref{fig:ribbonTransformation_S0} and prepare the elimination of a junction. 
Reidemeister 1 type moves (with and without singularity) allow the removal of the junction, 
see Figure \ref{fig:junction_move}.

Note that these eliminations are not possible for $S \neq -1,0$.
\end{example}

Why, then, are the ribbons $\mathcal{B}(R,S,T)$ in Figure \ref{fig:ribbon11a103} not symmetric 
although they seem to be free of crossings and junctions? This is explained in Figure \ref{fig:auxiliary_piece}:
for convenience we allow an additional auxiliary elementary piece in the band construction. 
If this is expressed by elementary pieces, a junction or crossing is introduced, showing that
the bands $\mathcal{B}(R,S,T)$ are not symmetric.

\begin{figure}[hbtp]
  \centering
  \includegraphics[scale=\scaling]{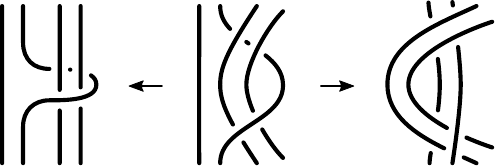}
  \caption{The auxiliary elementary piece in the middle transforms to a description with 
	either a junction or a crossing if only elementary pieces are to be used.}
  \label{fig:auxiliary_piece}
\end{figure} 

\begin{figure}[hbtp]
  \centering
  \includegraphics[scale=0.7]{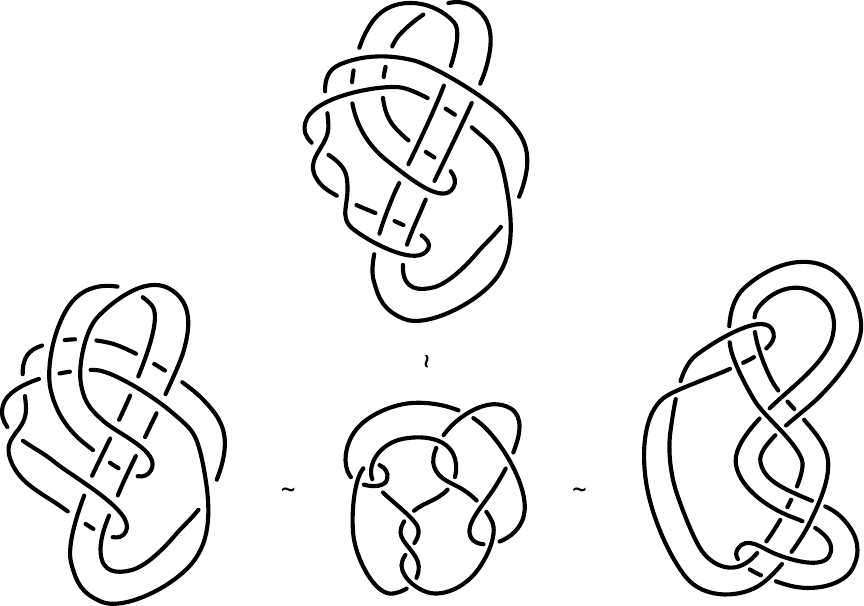}
  \caption{Three ribbons representing the knot 12n268 (shown in the middle): are the band diagrams $\mathcal{A}(2,-1)$ 
   (left), $\mathcal{A}(-3,1)^*$ (top), and the diagram taken from template $3_1$, version b (right) equivalent?
	 The left and top diagrams are related by mutation (Proposition \ref{mutants}) and the right diagram is symmetric.}
  \label{fig:12n268_bands}
\end{figure} 

The next step, a systematic study of band surfaces, their equivalence relations and invariants, 
is postponed to a future article. Suitable tools should then allow one, for instance, to show the equivalence 
or non-equivalence of the bands in Figure \ref{fig:12n268_bands}, all representing the knot 12n268.
An analogous investigation for symmetric ribbons has been done in our articles [6] and [7].

\newpage
\section{Knotoid notation for symmetric ribbon disks}\label{knotoids}
This section contains the description of a new diagram type for bands constructed 
from symmetric unions. It can be seen as a continuation of Aceto's proposition in 
the last section and was already used in Figure \ref{fig:achter_3D} in the insets.

This notation is called \textsl{knotoid diagram for a symmetric ribbon disk} and is motivated 
by the observation that one half of a symmetric union diagram is redundant and that the 
height function, necessary to construct a band from a symmetric union diagram, can be 
taken directly from the diagram. This facilitates the construction and visualisation of symmetric bands. 

\begin{figure}[hbtp]
  \centering
  \includegraphics[scale=\scaling]{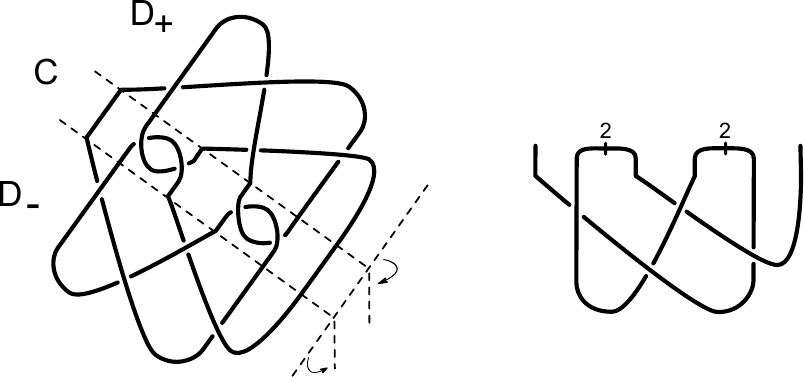}
  \caption{Illustration of the knotoid definition}
  \label{fig:knotoid_def}
\end{figure} 

\begin{definition}[knotoid diagram for a symmetric union]\label{knotoiddef}
Consider a symmetric union diagram $D$, consisting of the two mirror-symmetric parts 
$D_-$ and $D_+$ and the central part $C$ as in Figure \ref{fig:knotoid_def}. 
The knotoid diagram for $D$ is the arc obtained from $D_-$ by identifying the ends after 
cutting open the crossings on the axis. The (signed) numbers of crossings at the twist tangles in $C$ 
are used to decorate the arc so that the diagram $D$ can be recovered from its knotoid diagram. 
The knotoid endpoints correspond to the two perpendicular crossing points of the diagram with the axis. 
By folding the diagram down as indicated in the figure -- using the diagram to 
supply the height function -- we define the \textsl{knotoid band} given by the knotoid diagram of D. 
It is a ribbon of constant width except in the regions near crossings and is displayed in Figure \ref{fig:achter_3D}.
\end{definition}

This is easily extended to the notion of \textsl{knotoid diagram for a symmetric ribbon disk}:
We allow decorations with twist numbers for every segment between the arc's crossings and 
drop the requirement for the endpoint positions. 
%In this more general situation the corresponding knotoid band is constructed as in Definition \ref{knotoiddef}. 
In more detail, a knotoid diagram is a marked generically embedded arc $A$ 
in the $(x,y)$-plane. It determines a `vertical' immersion of $A \times [-1,1]$ in $\R^3$.
The markings indicate how the immersion can be perturbed into a ribbon immersion.
This step uses the over/under crossing information for choosing the ribbon singularities and the twist
decorations for inserting twists. Note that it is not possible to construct a ribbon disk containing band 
crossings or junctions from a knotoid diagram.

\begin{figure}[hbtp]
  \centering
  \includegraphics[scale=0.6]{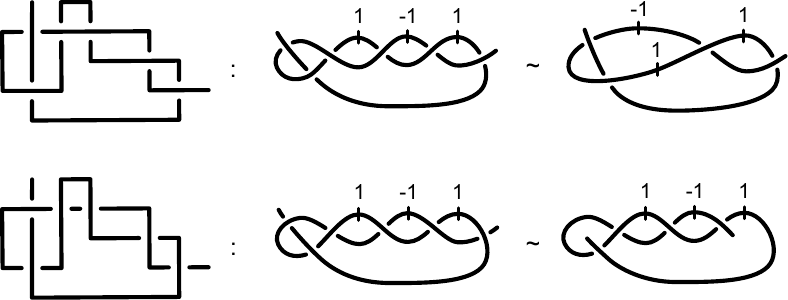}
  \caption{Knotoid diagrams for template $3_1$, version b. The second and third columns show the knot 12n268.}
  \label{fig:arc_diagrams}
\end{figure}

We chose the name due to the similarity to knotoids due to Vladimir Turaev \cite{Turaev}. 
Compare also the related definition of arc diagrams for ribbon 2-knots\footnote{In the context of 2-dimensional 
knots the notion of `symmetric ribbon spheres' was used by Takeshi Yajima \cite{Yajima}. He proved that 
ribbon 2-knots can be deformed to a symmetric position in $\R^4$ (symmetric to the plane $x_4=0$). For comparison 
with classical knots we summarise the situation for 2-knots: all 2-knots are slice. There are slice 2-knots which 
are not ribbon. All ribbon 2-knots are symmetric (in the above sense).}, see \cite{KanenobuKomatsu}. 
Knotoid diagrams and -bands enjoy the following properties: 
\begin{itemize}
\item[a)]	Twist decorations on the same underpassing arc can be added because ribbon 
twists can be moved through singularities. This is not possible for overpassing arcs 
because the movement is prevented by the intersecting band.
\item[b)]	Reidemeister moves for the knotoid diagram correspond to symmetric Reidemeister moves 
of the symmetric union diagram. They might be restricted by the arc's twist decorations. 
Analogously to the knotoid (and arc) diagram case the end of an arc can only be pulled 
out if it is an underpassing end. For an illustration see Figure \ref{fig:arc_diagrams}.
\item[c)]	The insertion of twists in the band is easier for knotoid diagrams than for symmetric union diagrams: 
for the latter for arcs far away from the axis it is necessary to over- or undercross parts of the diagram. 
Knotoid diagrams avoid this because twists can be inserted everywhere.
\end{itemize}

%\enlargethispage{0.6cm}

Interestingly, knotoid bands for a knot given by a symmetric union diagram exist in two variants: 
if the diagram is rotated around its axis the diagram parts $D_-$ and $D_+$  are exchanged and the knotoid 
corresponding to $D_+$ after rotation may yield a different band for the same knot, see Figure \ref{fig:folding}
(to use the rotated $D_+$ and fold up is the same as to use $D_-$ and fold down). 
Figure \ref{fig:arc_diagrams} contains two surprisingly different simplifications of the two bands for the knot 
12n268 obtained in this way.

In Figure \ref{fig:knotoid-practical} we explain the practical construction of a band from a knotoid diagram. 
In the same way a band for a complicated symmetric union can be constructed -- via its knotoid diagram.
We propose to study knotoid diagrams for symmetric ribbon disks in detail in a separate article.

\begin{figure}[hbtp]
  \centering
  \includegraphics[scale=\scaling]{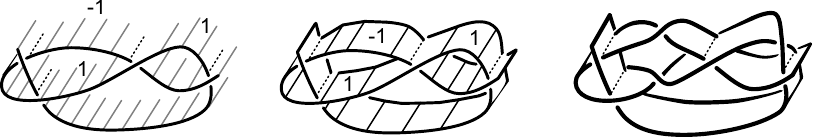}
  \caption{Practical steps for constructing the band from a knotoid diagram}
  \label{fig:knotoid-practical}
\end{figure}

\begin{figure}[hbtp]
  \centering
  \includegraphics[scale=0.5]{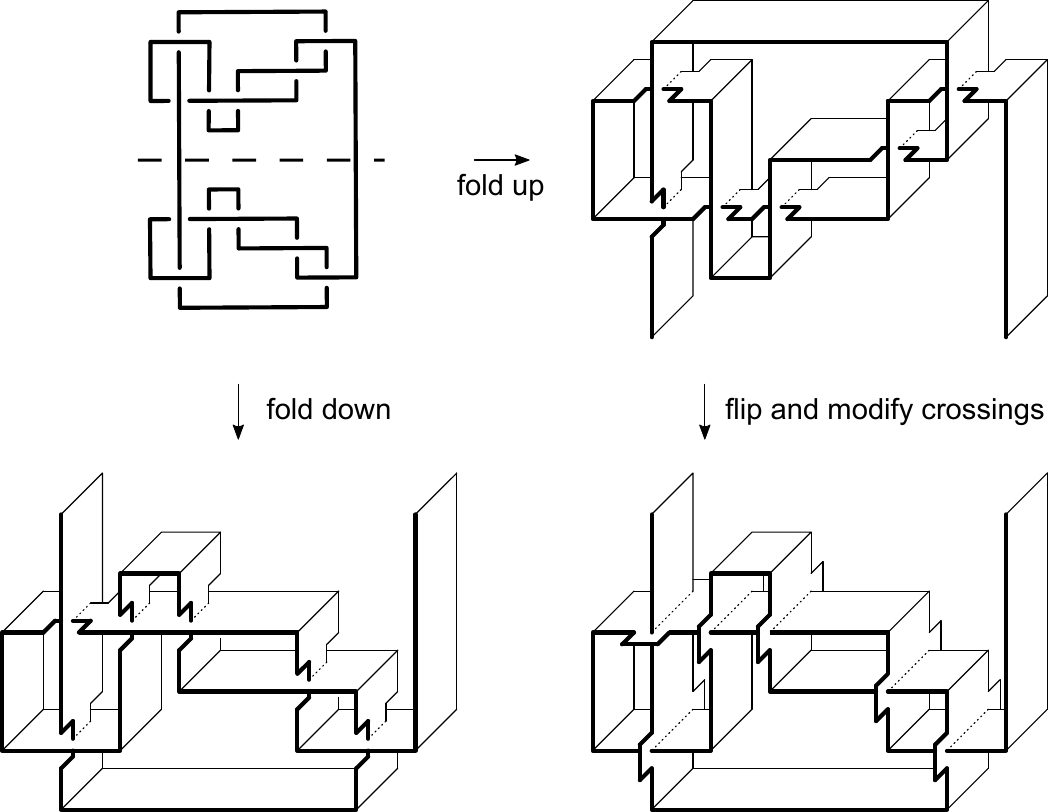}
  \caption{The two folding operations, illustrated with template $3_1$, version b}
  \label{fig:folding}
\end{figure}

%\clearpage
%\newpage
\section*{Outlook}\label{outlook}
Besides proving that non-symmetric ribbon knots exist and pursuing the topic of band equivalences and
corresponding invariants, an attractive goal is the study of ribbon knots with 13 crossings. For them Seeliger 
found 246 prime symmetric ribbon knots and an interesting discrepancy in the identification rate for different
determinants: for instance, for determinant 289 there are 35 candidates and 32 symmetric hits, while for 
determinant 441 there are 23 candidates and no hits. 
%The total number the search found for prime symmetric ribbon knots with 14 crossings is 721.

In another direction, a question similar to the slice-ribbon problem occurs: if we define symmetric slice 
knots by requiring a symmetric disk and allow twists at the saddles on the symmetry plane, can we prove 
that they are symmetric ribbon knots, i.e.\,can local minima be avoided?

\section*{Acknowledgements}
I thank Lukas Lewark and Johannes Renkl for commenting on earlier versions of this article and the referee
for additional valuable help. 
% ... supported this analysis of his results and provided answers to several detailed questions.
The support of Axel Seeliger, who has left knot theory research to work in industry, is gratefully acknowledged.

%\enlargethispage{0.5cm}

%\clearpage
%\newpage
%%%%%%%%%%%%%%%%%%%%%%%%%%%%%%%%%%%%%%%%%%%%%%%%%%%%%%%

\vspace{1cm}
\noindent
Christoph Lamm \\ \noindent
R\"{u}ckertstr. 3, 65187 Wiesbaden \\ \noindent
Germany \\ \noindent
e-mail: christoph.lamm@web.de

\clearpage
%%%%%%%%%%%%%%%%%%%%%%%%%%%%%%%%%%%%%%%%%%%%%%%%%%%%%%%
% Appendix: diagrams
%%%%%%%%%%%%%%%%%%%%%%%%%%%%%%%%%%%%%%%%%%%%%%%%%%%%%%%
\section{Appendix: the 28 templates generating 122 symmetric union diagrams}
\noindent
The diagrams are grouped by determinant; further explanations are given in the text.

\vspace{1cm}

% all text in the diagram overview should be \small except the info on determinants
\small
%
%%% det = 1 %%%%%%%%%%%%%%%%%%%%%%%%%%%%%%%%%%%%%%%%%%%%%%%%%%%%%%%

{\large
\marginpar{\hfill\colorbox{myboxcolour}{$\det=1$}}
}

% template triv_5cr
\noindent
\parbox[t]{3.0cm}{
\centering
\mbox{} \\
\includegraphics[scale=\scaling]{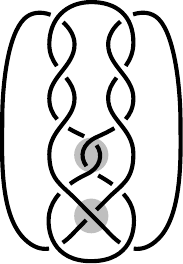} \\
$K_{\pm} =$ triv, version a
}
% instances for template triv_5cr
\parbox[t]{3.2cm}{
\centering
\mbox{} \\
\begin{tabular}{lcr@{, }r}
11n42   &=&(2& 0) \\
11n49   &=&(1& -1) \\
11n116  &=&(1&  1) \\
12n214  &=&(3&  0) \\
12n309  &=&(1& -2) \\
\underline{12n313}  &=&(2& -1) \\
12n318  &=&(1&  2) \\
12n430  &=&(2&  1)
\end{tabular}
}
\quad
% template triv_6cr 
\parbox[t]{4cm}{
\centering
\mbox{} \\
\includegraphics[scale=\scaling]{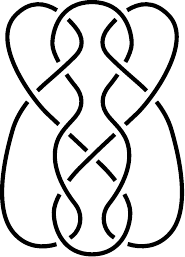} \\
$K_{\pm} =$ triv, version b \\
12n19 = (1)
}

\vspace{1cm}

%%% det = 9 %%%%%%%%%%%%%%%%%%%%%%%%%%%%%%%%%%%%%%%%%%%%%%%%%%%%
{\large
\marginpar{\hfill\colorbox{myboxcolour}{$\det=9$}}
}

% template 3_1
\noindent
\parbox[t]{3.5cm}{
\centering
\includegraphics[scale=\scaling]{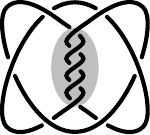} \\
$K_{\pm} = 3_1$, version a \\
\underline{11n139} = (5) \\
12n582 = (6) \\
\mbox{}
}
\quad
% template 3_1_6cr
\noindent
\parbox[t]{4cm}{
\centering
\includegraphics[scale=\scaling]{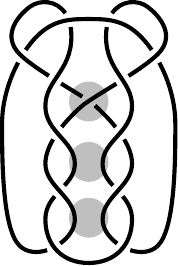} \\
$K_{\pm} = 3_1$, version b \\
\begin{tabular}{lcr@{, }r@{, }r}
12n268  &=&(1& -1&  1) \\
\underline{12n605}  &=&(1&  0&  0) \\
\end{tabular}
\mbox{}
}
\quad
\noindent
\parbox[t]{4cm}{
\centering
\includegraphics[scale=\scaling]{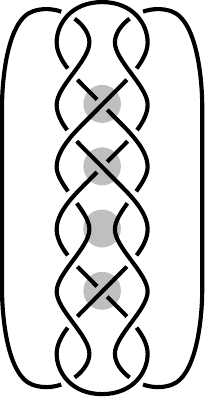} \\
$K_{\pm} = 3_1$, version c \\
\begin{tabular}{lcr@{, }r@{, }r@{, }r}
\underline{12n23}  &=&(1& -1&  0& 1) \\
12n676 &=&(0&  1& -3& 1) \\
\end{tabular}
\mbox{}
}

\vspace{1cm}

%%% det = 25 %%%%%%%%%%%%%%%%%%%%%%%%%%%%%%%%%%%%%%%%%%%%%%%%%%%
{\large
\marginpar{\hfill\colorbox{myboxcolour}{$\det=25$}}
}

% templates 4_1 and 5_1
\noindent
\parbox[t]{2.5cm}{
\centering
\mbox{} \\
\includegraphics[scale=\scaling]{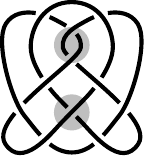} \\
$K_{\pm} = 4_1$ 
}
% instances for template 4_1
\parbox[t]{3.3cm}{
\centering
\mbox{} \\
\begin{tabular}{lcr@{, }r}
11n50   &=&(3&  0) \\
\underline{11n132}  &=&(2& 1) \\
12n145  &=&(4&  0) \\
12n462  &=&(2& -2) \\
12n768  &=&(3&  1) \\
12n838  &=&(2&  2) 
\end{tabular}
}
\quad
\parbox[t]{3.3cm}{
\centering
\mbox{} \\
\includegraphics[scale=\scaling]{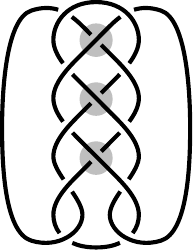} \\
$K_{\pm} = 5_1$ 
}
% instances for template 5_1
\parbox[t]{3.7cm}{
\centering
\mbox{} \\
\begin{tabular}{lcr@{, }r@{, }r}
11n37   &=&(0&  1&  0) \\
\underline{11n39}  &=&(1& -1& -1) \\
12n256  &=&(1& -1& -2) \\
12n257  &=&(1& -1&  2) \\
12n279  &=&(0&  1&  1) \\
12n394  &=&(2& -1& -1) \\
12n414  &=&(1&  1&  0) \\
12n670  &=&(0&  2&  0) \\
12n721  &=&(2&  0&  0) \\
12n870  &=&(2& -1&  2) \\
\end{tabular}
}

\vfill

\begin{center}
List of symmetric diagrams, page 1 (determinants 1, 9 and 25)
\end{center}

%\vspace{1cm}
\newpage

%%% det = 49 %%%%%%%%%%%%%%%%%%%%%%%%%%%%%%%%%%%%%%%%%%%%%%%%%%%%%%%
{\large
\marginpar{\hfill\colorbox{myboxcolour}{$\det=49$}}
}

% template 5_2
\noindent
\parbox[t]{3.0cm}{
\centering
\mbox{} \\
\includegraphics[scale=\scaling]{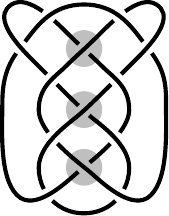} \\
$K_{\pm} = 5_2$ 
}
% instances for template 5_2
\parbox[t]{3.9cm}{
\centering
\mbox{} \\
\begin{tabular}{lcr@{, }r@{, }r}
\underline{11n4} &=&(1& 1& -1) \\
11n21   &=&(0&  1&  0) \\
11n83   &=&(1& -1&  0) \\
11n172  &=&(1& -1&  3) \\
12n24   &=&(1& -1& -1) \\
12n48   &=&(2&  0& -1) \\
12n87   &=&(0&  2&  0) \\
12n288  &=&(0&  0&  2) \\
12n312  &=&(0&  1&  1) \\
12n360  &=&(1&  1&  0) \\
12n393  &=&(1& -1&  4) \\
12n397  &=&(2&  1& -1) \\
12n501  &=&(2&  0&  0) \\
12n708  &=&(1&  0&  1) \\
12n817  &=&(1&  0& -2)
\end{tabular}
}
\quad\quad
% template 7_1
\parbox[t]{4cm}{
\centering
\mbox{} \\
\includegraphics[scale=\scaling]{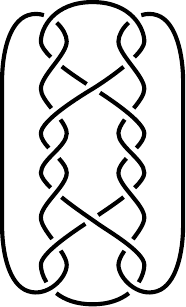} \\
$K_{\pm} = 7_1$ \\
12n706 = (1, -1)
}

\vspace{1cm}

%%% det = 81 %%%%%%%%%%%%%%%%%%%%%%%%%%%%%%%%%%%%%%%%%%%%%%%%%%%%%%%
{\large
\marginpar{\hfill\colorbox{myboxcolour}{$\det=81$}}
}

\noindent
% template 6_1
\parbox[t]{3.2cm}{
  \centering
  \mbox{} \\
  \includegraphics[scale=\scaling]{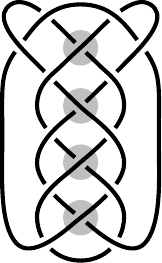} \\
  $K_{\pm} = 6_1$, version a 
}
% instances for template 6_1
\parbox[t]{4.2cm}{
  \centering
  \mbox{} \\
  \begin{tabular}{lcr@{, }r@{, }r@{, }r}
      12a425  &=&(0&  0&  0&  1) \\
      12a1029 &=&(1&  0&  0&  0) \\
      12a1283 &=&(1&  0&  0& -1) \\
      12n4    &=&(0&  1& -1& -1) \\
      12n43   &=&(0&  1&  0& -1) \\
      12n106  &=&(0&  1&  0& -2) \\
      12n170  &=&(1& -1&  0&  1) \\
      12n380  &=&(0&  0&  1& -1) \\
      12n399  &=&(0&  1& -1&  2) \\
      \underline{12n420}  &=&(1& -1&  1&  1) \\
      12n440  &=&(1&  0&  1& -2) \\
      12n636  &=&(2& -1&  1&  0) \\
      12n657  &=&(1&  0& -1&  1) \\
      12n876  &=&(1& -1&  0&  2) 
  \end{tabular}
}
\quad
\parbox[t]{4.7cm}{
\centering
\mbox{} \\
\includegraphics[scale=\scaling]{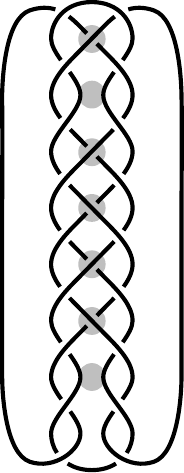} \\
$K_{\pm} = 9_1$ \\
11a58 = (0, 0, 1, -1, 0,  1, -1)\\
\underline{12n49} = (1, 0, 1, -1, 1, -1,  0)
}

\vspace{1cm}

\noindent
% template 6_1_v3
\parbox[t]{3.5cm}{
\centering
\mbox{} \\
\includegraphics[scale=\scaling]{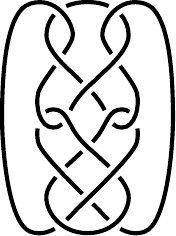} \\
$K_{\pm} = 6_1$, version b \\
12n782 = (1, -1, 1)
}
\quad
% template 3_1#3_1
\parbox[t]{3.8cm}{
\centering
\mbox{} \\
\includegraphics[scale=\scaling]{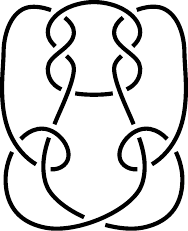} \\
$K_{\pm} = 3_1\#3_1$ \\
12n553 = (1)
}
\quad
% template  3_1#3_1*
\parbox[t]{3.8cm}{
\centering
\mbox{} \\
\includegraphics[scale=\scaling]{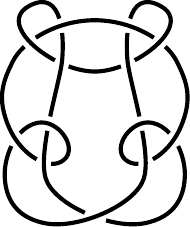} \\
$K_{\pm} = 3_1\#-3_1$ \\
12n556 = (1)
}

\vfill
\begin{center}
List of symmetric diagrams, page 2 (determinants 49 and 81)
\end{center}

\newpage

%%% det = 121 %%%%%%%%%%%%%%%%%%%%%%%%%%%%%%%%%%%%%%%%%%%%%%%%%%%%%%%
{\large
\marginpar{\hfill\colorbox{myboxcolour}{$\det=121$}}
}

\noindent
% template 6_2
\parbox[t]{3.5cm}{
  \centering
  \mbox{} \\
  \includegraphics[scale=\scaling]{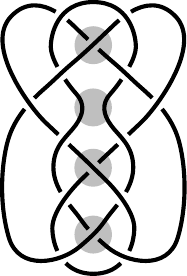} \\
  $K_{\pm} = 6_2$, version a
}
% instances for template 6_2
\parbox[t]{4.2cm}{
  \centering
  \mbox{} \\
  \begin{tabular}{lcr@{, }r@{, }r@{, }r}
      \underline{11a28}   &=&(1&  0& -1&  1) \\
      11a35   &=&(0&  0&  1& -1) \\
      11a36   &=&(0&  1& -1&  1) \\
      11a87   &=&(1& -1&  1& -1) \\
      11a96   &=&(0&  0& -1&  2) \\
      11a115  &=&(0&  1& -1&  2) \\
      11a169  &=&(1&  0& -1&  2) \\
      11a316  &=&(1& -1&  1& -2) \\
      12a183  &=&(0&  1&  0& -1) \\
      12a447  &=&(0&  0&  0&  1) \\
      12a667  &=&(0&  1&  0&  0) \\
      12a879  &=&(1& -1&  0&  1) \\
      12a1011 &=&(1&  0&  0& -1) \\
      12a1034 &=&(1&  0&  0&  0) \\
      12a1277 &=&(1& -1&  0&  0) \\
      12n802  &=&(1& -1& -1&  2) 
  \end{tabular}
}
\hspace{1cm}
\parbox[t]{4.8cm}{
\centering
\mbox{} \\
\includegraphics[scale=\scaling]{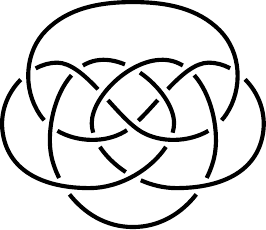} \\
$K_{\pm} = 6_2$, version b \\
12n702 = (1, -1, 1)
}

% positioning not as expected
\vspace{-2cm}{
\hspace{9cm}
\parbox[t]{3.5cm}{
\centering
\mbox{} \\
\includegraphics[scale=\scaling]{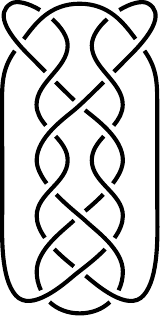} \\
$K_{\pm} = 7_2$ \\
12n504 = (1, 0, -1, 1)
}
}

\vspace{-2cm}

%%% det = 169 %%%%%%%%%%%%%%%%%%%%%%%%%%%%%%%%%%%%%%%%%%%%%%%%%%%%%%%
{\large
\marginpar{\hfill\colorbox{myboxcolour}{$\det=169$}}
}

\noindent
  % template 6_3
  \parbox[t]{3.4cm}{
    \centering
    \mbox{} \\
    \includegraphics[scale=\scaling]{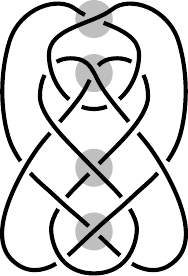} \\
    $K_{\pm} = 6_3$
  }
  % instances for template 6_3
  \parbox[t]{4.0cm}{
    \centering
    \mbox{} \\
    \begin{tabular}{lcr@{, }r@{, }r@{, }r}
      12a54   &=&(1&  0&  0&  0) \\
      12a211  &=&(1& -1& -1&  0) \\
      12a221  &=&(0&  1&  0&  0) \\
      12a258  &=&(1& -1&  0&  0) \\
      12a477  &=&(1&  0& -1&  0) \\
      12a606  &=&(1&  0&  0& -1) \\
      12a819  &=&(0&  1&  0& -1) \\
      12a1119 &=&(1& -1&  0& -1) \\
      \underline{12a1269} &=&(1& -1& -1&  1) 
    \end{tabular}
  }

\vspace{1cm}

\noindent
  % template 7_3
  \parbox[t]{3.0cm}{
    \centering
    \mbox{} \\
    \includegraphics[scale=\scaling]{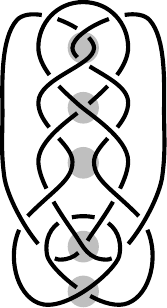} \\
     $K_{\pm} = 7_3$, version a
  }
  % instances for template 7_3
  \parbox[t]{4.6cm}{
    \centering
    \mbox{} \\
    \begin{tabular}{lcr@{, }r@{, }r@{, }r@{, }r}
      11a164  &=&(1& -1&  1&  1& -1) \\
      \underline{11a326}  &=&(2& -1&  0& -1&  1) \\
      12a3    &=&(1& -1&  1& -1&  0) \\
      12a173  &=&(0&  1& -1&  0&  0) \\
      12a279  &=&(1& -1&  1&  0&  0) \\
      12a646  &=&(0&  1& -1&  1&  0) \\
      12a715  &=&(2& -1&  0&  0&  0) \\
      12a786  &=&(2& -1&  0&  1&  0) \\
      12a1083 &=&(3& -1&  0&  0&  0) 
    \end{tabular}
  }
\quad
\parbox[t]{5cm}{
\centering
\mbox{} \\
\includegraphics[scale=\scaling]{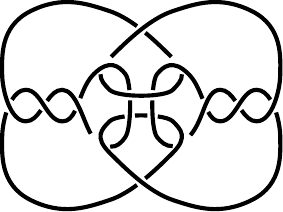} \\
$K_{\pm} = 7_3$, version b \\
12a1202 = (1, -1)
}

\vfill
\begin{center}
List of symmetric diagrams, page 3 (determinants 121 and 169)
\end{center}

\newpage

%%% det = 225 %%%%%%%%%%%%%%%%%%%%%%%%%%%%%%%%%%%%%%%%%%%%%%%%%%%%%%%
{\large
\marginpar{\hfill\colorbox{myboxcolour}{$\det=225$}}
}

\noindent
% template 7_4
\parbox[t]{4.5cm}{
\centering
\mbox{} \\
\includegraphics[scale=\scaling]{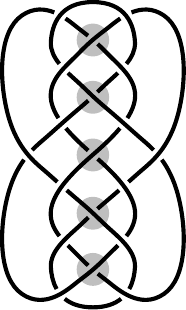} \\
$K_{\pm} = 7_4$ \\
\underline{12a435} = (1, -1, 1, -1,  1)\\
12a464 = (2, -1, 0,  1, -1)\\
12a975 = (2, -1, 1, -1,  2)
}
% template 3_1#4_1
\parbox[t]{4.2cm}{
\centering
\mbox{} \\
\includegraphics[scale=\scaling]{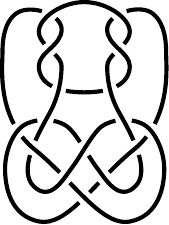} \\
$K_{\pm} = 3_1\#4_1$ \\
12a427 = (1, -1)
}
% template  8_21
\parbox[t]{4.2cm}{
\centering
\mbox{} \\
\includegraphics[scale=\scaling]{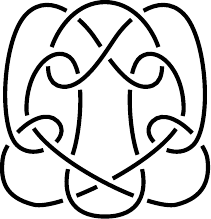} \\
$K_{\pm} = 8_{21}$ \\
12a631 = (1, -1)
}

\vspace{-1.3cm}
\hspace{4.8cm}
% template 9_2
\parbox[t]{2.8cm}{
  \centering
  \mbox{} \\
  \includegraphics[scale=\scaling]{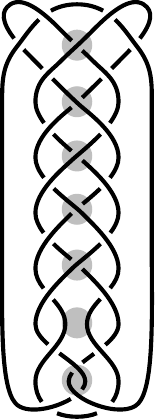} \\
  $K_{\pm} = 9_2$
}
% instances for template 9_2
\parbox[t]{5.6cm}{
  \centering
  \mbox{} \\
  \begin{tabular}{lcr@{, }r@{, }r@{, }r@{, }r@{, }r@{, }r}
    12a77   &=&(0&  1& -1&  0&  1& -1&  0)\\
    12a100  &=&(0&  1& -1&  1& -1&  0&  2)\\
    12a189  &=&(0&  1& -1&  0&  1& -1&  1)\\
    12a245  &=&(1& -1&  1&  0& -1&  1&  0)\\
    12a377  &=&(1& -1&  1&  0& -1&  1& -1)\\
    12a456  &=&(0&  0&  1& -1&  0&  1& -2)\\
    12a979  &=&(1&  0& -1&  1&  0& -1&  2)\\
    \underline{12a1087} &=&(1& -1&  1& -1&  1&  0& -2)
  \end{tabular}
}

\vspace{0cm}

%%% det = 289 %%%%%%%%%%%%%%%%%%%%%%%%%%%%%%%%%%%%%%%%%%%%%%%%%%%%%%%
{\large
\marginpar{\hfill\colorbox{myboxcolour}{$\det=289$}}
}

\noindent
% template 7_5
\parbox[t]{4.0cm}{
\centering
\mbox{} \\
\includegraphics[scale=\scaling]{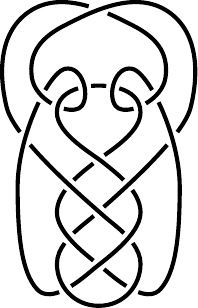} \\
$K_{\pm} = 7_5$, version a \\
12a484 = (1, -1, -1, 1)
}

% template 7_5_v2
\vspace{-4cm}
\hspace{3.9cm}
\parbox[t]{4.5cm}{
\centering
\mbox{} \\
\includegraphics[scale=\scaling]{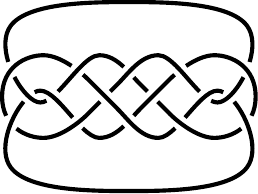} \\
$K_{\pm} = 7_5$, version b \\
12a1105 = (1, -1)
}

% template 8_3
\vspace{-5.7cm}
\hspace{8.5cm}
\parbox[t]{5cm}{
  \centering
  \mbox{} \\
  \includegraphics[scale=\scaling]{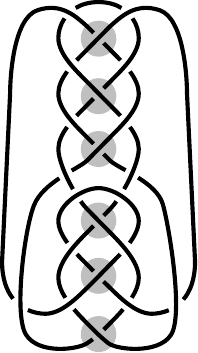} \\
  $K_{\pm} = 8_3$ \\
  \begin{tabular}{lcr@{, }r@{, }r@{, }r@{, }r@{, }r}
    \underline{12a458}  &=&(1& -1&  1& -1&  1& -1)\\
    12a473  &=&(1& -1&  1&  2& -1&  0)\\
    12a887  &=&(2& -1&  0& -2&  1&  0)
  \end{tabular}
}

\vspace{1cm}

%%% det = 361 %%%%%%%%%%%%%%%%%%%%%%%%%%%%%%%%%%%%%%%%%%%%%%%%%%%%%%%
{\large
\marginpar{\hfill\colorbox{myboxcolour}{$\det=361$}}
}

\noindent
% template 7_6
\vspace{-6cm}
\hspace{4.5cm}
\parbox[t]{4.2cm}{
\centering
\mbox{} \\
\includegraphics[scale=\scaling]{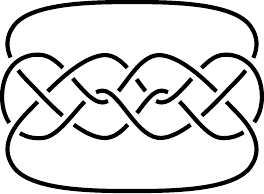} \\
$K_{\pm} = 7_6$ \\
12a1019 = (1, -1)
}

% task: improve the placements in this page
%\enlargethispage{0.5cm}
\vspace{6.1cm}

\begin{center}
List of symmetric diagrams, page 4 (determinants 225, 289 and 361)
\end{center}


\begin{thebibliography}{99}

\bibitem[Aceto 14]{Aceto}
  P.\,Aceto: \textit{Symmetric ribbon disks}, J. Knot Theory Ramifications {\bf 23} (2014), 1450048

\bibitem[Boocher, Daigle, Hoste and Zheng 09]{BoocherDaigleHosteZheng}
  A.\,Boocher, J.\,Daigle, J.\,Hoste and W.\, Zheng: \textit{Sampling Lissajous and Fourier Knots},
  Experiment. Math. {\bf 18} (2009), 481--497
	
\bibitem[Cha and Livingston]{ChaLivingstonKnotInfo}
  J.C.\,Cha and C.\,Livingston: \textit{KnotInfo: Table of Knot Invariants}, \newline http://www.indiana.edu/$\sim$knotinfo

\bibitem[Cha and Livingston 14]{ChaLivingstonUnknown}
  J.C.\,Cha and C.\,Livingston: \textit{Unknown values in the table of knots}, preprint (last version 2014), 
	arxiv:math.GT/0503125
		
\bibitem[Eisermann 09]{Eisermann}
  M.\,Eisermann: \textit{The Jones polynomial of ribbon links}, 
	Geometry \& Topology {\bf 13} (2009), 623--660
			
\bibitem[Eisermann and Lamm 07]{EisermannLamm2007}
  M.\,Eisermann and C.\,Lamm: \textit{Equivalence of symmetric union diagrams}, 
	J. Knot Theory Ramifications {\bf 16} (2007), 879--898
	
\bibitem[Eisermann and Lamm 11]{EisermannLamm2011}
  M.\,Eisermann and C.\,Lamm: \textit{A refined Jones polynomial for symmetric unions}, 
	Osaka J. Math. {\bf 48} (2011), 333--370
	
\bibitem[Herald, Kirk and Livingston 10]{HeraldKirkLivingston}			
  C.\,Herald, P.\,Kirk and C.\,Livingston: \textit{Metabelian representations, twisted Alexander 
	polynomials, knot slicing, and mutation}, Math. Zeitschrift {\bf 265} (2010), 925--949
	
\bibitem[Hoste, Thistlethwaite and Weeks 98]{HosteThistlethwaiteWeeks}
  J.\,Hoste, M.\,Thistlethwaite and J.\,Weeks: \textit{The first 1,701,936 knots}, 
	Math. Intelligencer {\bf 20} (1998), 33--48

\bibitem[Kanenobu 86]{Kanenobu}
  T.\,Kanenobu: \textit{Examples of polynomial invariants of knots and links}, 
	Math. Ann. {\bf 275} (1986), 555--572
		
\bibitem[Kanenobu 10]{Kanenobu2010}
  T.\,Kanenobu: \textit{Band surgery on knots and links}, 
	J. Knot Theory Ramifications {\bf 19} (2010), 1535--1547
			
\bibitem[Kanenobu and Komatsu 17]{KanenobuKomatsu}
  T.\,Kanenobu and S.\,Komatsu: \textit{Enumeration of ribbon 2-knots presented by virtual arcs}, 
	J. Knot Theory Ramifications {\bf 26} (2017), 1750042

\bibitem[Kearney 13]{Kearney}
  K.\,Kearney: \textit{The concordance genus of 11--crossing knots},
	J. Knot Theory Ramifications {\bf 22} (2013), 1350077

\bibitem[Kinoshita and Terasaka 57]{KinoshitaTerasaka}
  S.\,Kinoshita and H.\,Terasaka: \textit{On unions of knots}, Osaka Math. J. {\bf 9} (1957), 131--153

\bibitem[Knotscape 99]{Knotscape}
  J.\,Hoste and M.\,Thistlethwaite: \textit{Knotscape -- providing convenient access to tables of knots},
	http://www.math.utk.edu/$\sim$morwen/knotscape.html, 1999

\bibitem[Lamm 00]{Lamm}
  C.\,Lamm: \textit{Symmetric unions and ribbon knots}, Osaka J. Math. {\bf 37} (2000), 537--550
	
\bibitem[Lamm 06]{Lamm_talk}
  C.\,Lamm: \textit{Symmetric union presentations for 2-bridge ribbon knots}, preprint (2006), 
	\newline arxiv:math.GT/0602395
	
\bibitem[Livingston 04]{LivingstonConcordanceGenus}	
	C.\,Livingston: \textit{The concordance genus of knots}, Alg. \& Geom. Topology {\bf 4} (2004), 1--22
	
\bibitem[Piccirillo 18]{Piccirillo}
  L\,Piccirillo: \textit{The Conway knot is not slice}, preprint (2018), arxiv:math.GT/1808.02923
	
\bibitem[Seeliger 14]{Seeliger}
  A.\,Seeliger: \textit{Symmetrische Vereinigungen als Darstellungen von Bandknoten bis 14 Kreuzungen
	(Symmetric union presentations for ribbon knots up to 14 crossings)}, Diploma thesis, Stuttgart University (2014)
	
\bibitem[Tanaka 10]{Tanaka2010}
  T.\,Tanaka: \textit{Ribbon 2-knots with symmetric union presentations}, 
	J. Knot Theory Ramifications {\bf 19} (2010), 1--13
					
\bibitem[Turaev 12]{Turaev}
  T.\,Turaev: \textit{Knotoids}, 
	 Osaka J. Math. {\bf 49} (2012), 195--223
	
\bibitem[Watson 07]{Watson}
  L.\,Watson: \textit{Knots with identical Khovanov homology}, 
	 Alg. \& Geom. Topology {\bf 7} (2007), 1389--1407

\bibitem[Yajima 64]{Yajima}
T.\,Yajima: \textit{On simply knotted spheres in $\R^4$}, Osaka J. Math. {\bf 1} (1964), 133--152

\end{thebibliography}
\end{document}